 \newtheorem{thrm}{Theorem}[section]
 \newtheorem{lemma}[thrm]{Lemma}
 \newtheorem{proposition}[thrm]{Proposition}
 \newtheorem{definition}[thrm]{Definition}
 \newtheorem{remark}[thrm]{Remark}
\DeclareOldFontCommand{\bf}{\normalfont\bfseries}{\mathbf}
\title{Classification of strongly positive representations of even general unitary groups.}
\author{Yeansu Kim and Ivan Mati\'{c}}
\begin{document}

\maketitle

\begin{abstract}
We explicitly construct the structure of Jacquet modules of parabolically induced representations of even unitary groups and even general unitary groups over a $p$-adic field $F$ of characteristic different than two. As an application, we obtain a classification of strongly positive discrete series representations of those groups.
\end{abstract}

{\renewcommand{\thefootnote}{} \footnotetext[1]{\textit{MSC2000:}
20C11, 11F70}
\footnotetext[2]{\textit{Keywords:} Tadi\'c's structure formula, strongly positive representations}}

\section{Introduction}

The first purpose of this paper is to explicitly construct the Tadi\'{c}'s structure formula for the even unitary groups and the even general unitary groups. The Tadi\'{c}'s structure formula explores the Jacquet modules of parabolically induced representations. In the case of general linear groups, the Jacquet modules of parabolically induced representations are studied in \cite{ZB, Z}. The case of classical groups is of different nature due to difference of its Weyl groups and its action on the Levi subgroups. In \cite{T1}, Tadi\'c explicitly describe the structure of Jacquet modules in the cases of $Sp_{2n}, GSp_{2n},$ and $SO_{2n+1}$, later it is generalized to the cases of $SO_{2n},$ metaplectic group, and $GSpin$ groups in \cite{B, J, K1, K2, HM}. Using of the Tadi\'{c}'s structure formula, one can determine all Jacquet modules of certain classes of representations \cite{M2, MaT}.

The Tadi\'c's structure formula also happens to be extremely useful for the study of reducibility and composition series of certain induced representations which happen to be important for understanding of the unitary dual, such as standard representations and generalized principal series.

As an application of Tadi\'c's structure formula, the second purpose of this paper is to obtain a classification of strongly positive representations of even unitary groups and even general unitary groups. We note that the strongly positive representations serve as basic building blocks in the classification of discrete series of classical groups, including unitary ones, obtained in \cite{Mo, MT}, and in the classification of discrete series representatons of odd $GSpin$ groups, recently provided in \cite{KM}.

This paper is organized as follows: In Section \ref{Sec:2}, we outline standard notation. In Section \ref{Sec:3}, we obtain the Tadi\'c's structure formula for even general unitary groups, which describes the explicit structure of the Jacquet modules of the parabolically induced representations of general unitary groups. In Section \ref{Sec:4}, we obtain a classification of strongly positive representations of even general unitary groups. In the appendix, we also discuss the even unitary group case.

First author has been supported by the National Research Foundation of Korea (NRF) grant funded by the Korea government (MSIP) \\ (No.\ $2017R1C1B2010081$).

Second author has been supported by Croatian Science Foundation under the project $9364$.

\section{Notation and preliminaries}
\label{Sec:2}

\subsection{Notation}
\label{Sec:2.1}
 Let $F$ be a non-Archimedean local field of characteristic different than two and let $E/F$ be a quadratic extension of fields of characteristic different than two. Let $\Gamma=Gal(E/F)$ and we let $x \rightarrow \bar{x}$ be its non-trivial element. Choose an element $\beta \in E$ such that $E=F(\beta)$ and $\bar{\beta}=-\beta$. To define the unitary groups, we set
$$J_n = \begin{pmatrix}
 & \beta I_n \\
 -\beta I_n &
\end{pmatrix}.$$

We let ${\bf H}_n={\bf GU}(n,n)$ be the quasi-split general unitary group in $2n$ variables defined with respect to $E/F$ and $J_n$. Its $F$-points are
$${\bf H}_n(F) = \{ g \in GL_{2n}(E) | ^t\bar{g}  J_ng=\lambda J_n, \lambda \in E^{\times} \}$$
We fix $\lambda$ throughout the paper.

Let ${\bf s}$ and ${\bf M_s}$ be as in Remark \ref{rem:Levi for GU}. For a parabolic subgroup ${\bf P_s = M_s N_s}$ of ${\bf H}_n$, we denote the induced representation $\operatorname{Ind}_{\bf P_s}^{{\bf H}_n}(\rho_1 \otimes \cdots \otimes \rho_k \otimes \tau)$ by
$$ \rho_1 \times \cdots \times \rho_k \rtimes \tau$$
\noindent where each $\rho_i$ (resp. $\tau$) is a representation of some $\textbf{GL}_{n_i}(E)$ (resp. $\textbf{H}_{n-n'}(F)$). In particular, $\operatorname{Ind}_{\textbf{P}_{\textbf{s}}}^{\textbf{H}_n}$ is a functor from admissible representations of $\textbf{M}_{\textbf{s}}(F)$ to admissible representations of $\textbf{H}_n(F)$ that sends unitary representations to unitary representations. We also denote the normalized Jacquet module with respect to $\textbf{P}_{\textbf{s}}$ by $r_{\textbf{s}}(\tau)$. In particular, $r_{\textbf{s}}$ is a functor from admissible representations of $\textbf{H}_n(F)$ to admissible representations of $\textbf{M}_{\textbf{s}}(F)$.

The Grothendieck group of the category of all admissible representations of finite length of $\textbf{H}_n(F)$, i.e., a free abelian group over the set of all irreducible representations of $\textbf{H}_n(F)$ (resp. $\textbf{GL}_n(E)$) is denoted by $R_{GU}(n)$ (resp. $R_{GL}(n)$) and set $R_{GU}= \displaystyle\mathop{\oplus}\limits_{n \geq 0} R_{GU}(n), \ R_{GL}= \displaystyle\mathop{\oplus}\limits_{n \geq 0} R_{GL}(n).$

In the case of $\textbf{GL}$, we denote the induced representation $\operatorname{Ind}_{\textbf{P}'}^{\textbf{GL}_n}(\rho_1 \otimes \cdots \otimes \rho_k)$ by
$$\rho_1 \times \cdots \times \rho_k$$
\noindent such that $\textbf{P}'=\textbf{M}'\textbf{N}'$ is the standard parabolic subgroup of $\textbf{GL}_n$ where $\textbf{M}' \cong \textbf{GL}_{n_1} \times \textbf{GL}_{n_2} \times \cdots \times \textbf{GL}_{n_k}$ and each $\rho_i$ is a representation of $\textbf{GL}_{n_i}(E)$ for $i=1, \ldots, k$. We also follow the notation in \cite{ZB}. Let $\rho$ be an irreducible unitary cuspidal representation of some $\textbf{GL}_p(E)$. We define the segment, $\Delta:=[\nu^a \rho, \nu^{a+k} \rho] = \{\nu^a \rho, \nu^{a+1} \rho, \ldots \nu^{a+k} \rho\}$ where $a \in \mathbb{R}$ and $k \in \mathbb{Z}_{\geq 0}$. If $a > 0$, we call the segment $\Delta$ strongly positive.

\section{The Tadi\'c's structure formula: general unitary groups}
\label{Sec:3}

We fix the $F$-Borel subgroup {\bf B} of upper triangular matrices in ${\bf H}_n$. Then ${\bf B=TU}$, where ${\bf T}$ is a maximal torus of diagonal elements in ${\bf H}_n$ and let ${\bf A}_0$ be the maximal $F$-split subtorus of ${\bf T}$. Then,
$${\bf T}(F) = \left\lbrace \begin{pmatrix}
x_1 & & & & & \\
 & \ddots & & & & \\
 & & x_n & & & \\
 & & & \lambda \bar{x}_1^{-1}& & \\
 & & & &\ddots & \\
 & & & & & \lambda \bar{x}_n^{-1}
\end{pmatrix}
| x_i \in E^{\times}, \lambda \in F^{\times} \right\rbrace$$
and
$${\bf A}_0(F) = \left\lbrace \begin{pmatrix}
x_1 & & & & & \\
 & \ddots & & & & \\
 & & x_n & & & \\
 & & & \lambda x_1^{-1}& & \\
 & & & &\ddots & \\
 & & & & & \lambda x_n^{-1}
\end{pmatrix}
| x_i, \lambda  \in F^{\times} \right\rbrace
$$

For simplicity, we let $a(x_1, \cdots, x_n;\lambda)$ be an element of the form $diag(x_1, \cdots$, $x_n, \lambda x_1^{-1}, \cdots, \lambda x_n^{-1})$ in ${\bf A}_0(F)$.
Let $\Phi({\bf H}_n(F), {\bf A}_0(F))$ be the restricted roots of ${\bf H}_n(F)$ with respect to ${\bf A}_0(F)$ and let $\Delta:=\{\alpha_i\}_{i=1}^n$ be the set of simple roots, where $\alpha_i=e_i-e_{i+1}, 1 \leq i \leq n-1, \ \alpha_n = 2e_n-e_0$.

The Weyl group $W({\bf H}_n(F)/{\bf A}_0(F))$ is isomorphic to $S_n \rtimes \{ \pm 1 \}^n$, where $S_n$ is the permutation group of $n$ letters. More precisely, for $(ij) \in S_n$,
$$(ij) \cdot a(x_1, \cdots, x_n; \lambda) = a(x_1, \cdots, x_{i-1}, x_j, x_{i+1}, \cdots, x_{j-1}, x_i, x_{j+1}, \cdots, x_n;\lambda)$$
and for $\epsilon = (\epsilon_1, \cdots, \epsilon_n) \in \{ \pm 1 \}^n$,
$$\epsilon_i \cdot a(x_1, \cdots, x_n; \lambda) = a(x_1, \cdots, x_{i-1}, \lambda x_i^{\epsilon_i}, x_{i+1}, \cdots, x_n; \lambda).$$

\begin{remark}\label{rem:Levi for GU}
Let $\textbf{s}=(n_1, n_2, \ldots, n_k)$ be an ordered partition of some $n'$ such that $n' \leq n$ and let $\Theta = \Delta \backslash \{ \alpha_{n_1}, \alpha_{n_1+n_2}, \cdots, \alpha_{n_1 + \cdots + n_k} \}$. Let ${\bf A_s}$ be the subtorus of ${\bf A}_0$ that corresponds to $\Theta$ and let ${\bf M_s}$ be the centralizer of ${\bf A_s}$. Then, its $F$-points is of the form
$${\bf M_s}(F) = \left\lbrace \begin{pmatrix}
g_1 & & & & & & \\
 & \ddots & & & & &\\
 & & g_k & & & &\\
 & & & g & & & \\
 & & & & \lambda ^t \bar{g}_1^{-1}& & \\
 & & & & &\ddots & \\
 & & & & & & \lambda ^t \bar{g}_n^{-1}
\end{pmatrix}
| g_i \in {\bf GL}_{n_i}(E), g \in  {\bf H}_{n-n'}(F), \lambda \in F^{\times} \right\rbrace.
$$

Therefore, ${\bf M_s}(F) \cong \textbf{GL}_{n_1}(E) \times \textbf{GL}_{n_2}(E) \times \cdots \times \textbf{GL}_{n_k}(E) \times \textbf{H}_{n-n'}(F)$ and for simplicity, the element $diag(g_1, g_2, \cdots, g_k, g, \lambda ^t \bar{g}_1^{-1}, \lambda ^t \bar{g}_2^{-1}, \cdots, \lambda ^t \bar{g}_k^{-1})$ in ${\bf M_s}(F)$ is denoted by $(g_1, g_2, \cdots, g_k, g)$.
\end{remark}

Then, for an element $(g_1, g_2, \cdots, g_k, g) \in {\bf M_s}(F)$, the Weyl group $W({\bf H}_n(F)/{\bf A_s}(F))$ is a subgroup of $S_k \rtimes \{ \pm 1 \}^k$.
In particular, for $(ij) \in W({\bf H}_n(F)/{\bf A_s}(F)) \subset S_k$,
$$(ij) \cdot (g_1, g_2, \cdots, g_k, g) = (g_1, \cdots, g_{i-1}, g_j, g_{i+1}, \cdots, g_{j-1}, g_i, g_{j+1}, \cdots, g_k, g),$$

and for $\epsilon = (\epsilon_1, \cdots, \epsilon_k) \in \{ \pm 1 \}^k \subset W({\bf H}_n(F)/{\bf A_s}(F))$ with $\epsilon_i=-1, \epsilon_k=1$ for $k \neq i$,
\begin{equation}\label{eqn:Weyl group action on Levi for GU}
\epsilon \cdot (g_1, \cdots, g_i, \cdots, g_k, g) =(g_1, \cdots, \lambda ^t \bar{g}_i^{-1}, \cdots, g_k, g).
\end{equation}

Therefore, the Weyl group action on the maximal $F$-split torus ${\bf A}_0(F)$ and Levi subgroup ${\bf M_s}(F)$ of $\textbf{H}_n$ is similar to that for general symplectic groups (Note that the main difference is the Weyl group action on the Levi subgroup (\ref{eqn:Weyl group action on Levi for GU})). In \cite[Section 4]{T1}, Tadi\'c characterizes the representative element of the set $[W_{\Delta \backslash {\alpha}} \backslash W / W_{\Delta \backslash {\beta}}]$ and its explicit action on the simple roots for $\textbf{GSp}_{2n}$. We also get the same results, i.e., from Lemmas 4.1 through 4.8 of \cite{T1} in the case of even general unitary groups, since those lemmas only depend on the simple roots, Weyl group and its action on the simple roots and we also know that simple roots for $\textbf{H}_n$ is same as those for $\textbf{GSp}_{2n}$.

We now explain the Tadi\'{c}'s structure formula for $\textbf{H}_n$ and follow the notation in \cite{T1} for simplicity.
Let $i_1, i_2$ be integers which satisfy $1 \leq i_1, i_2 \leq n$. Take an integer $d$ such that $0 \leq d \leq \min \{i_1,i_2 \}$. Suppose that an integer $k$ satisfies $\max \{0, (i_1+i_2-n)-d \} \leq k \leq \min \{ i_1, i_2 \} -d.$
Let $p_n(d,k)_{i_1, i_2} \in S_n$ be defined by

\noindent \[ p_n(d,k)_{i_1, i_2}(j)  = \left\{ \begin{array}{cc}
  j & \text{for  } 1 \leq j \leq k; \\
  j+i_1-k & \text{for  } k+1 \leq j \leq i_2-d; \\
  (i_1+i_2-d+1)-j & \text{for  } i_2 - d + 1 \leq j \leq i_2; \\
  j-i_2+k & \text{for  } i_2 + 1 \leq j \leq i_1 + i_2 - d - k; \\
  j & \text{for  } i_1 + i_2 - d - k + 1 \leq j \leq n.
\end{array} \right. \]

Let $q_n(d,k)_{i_1, i_2}$ be $(p_n(d,k)_{i_1,i_2}, (\mathbf{1}_{i_2-d}, -\mathbf{1}_{d}, \mathbf{1}_{n-i_2}))$ where $\mathbf{1}_{i}=1, \ldots, 1$ (1 appears $i$ times).
Let $w=q_n(d,k)_{i_1, i_2}$. Then, for $(g_1, g_2, g_3, g_4, h) \in \textbf{GL}_k(E) \times \textbf{GL}_{i_2-d-k}(E) \times \textbf{GL}_{d}(E) \times \textbf{GL}_{i_1-d-k}(E) \times \textbf{H}_{n-i_1-i_2+d+k}(F)$, we have $w \cdot (g_1, g_2, g_3, g_4, h)= (g_1, g_4,\lambda \ ^t\bar{g}_3^{-1}, g_2, h).$

Let $\pi_i$ be an irreducible smooth representation of $\textbf{GL}_{n_i}(E)$ for $i=1,2,3,4$ and let $\sigma$ be an irreducible smooth representation of $\textbf{H}_m$. We have,
\begin{equation}\label{eqn:Weyl1 for GU}
w^{-1} \cdot (\pi_1 \otimes \pi_2 \otimes \pi_3 \otimes \pi_4 \otimes \sigma) = \pi_1 \otimes \pi_4 \otimes\ \check{\pi}_3 \otimes \pi_2 \otimes \omega_{\pi_
3} \sigma.
\end{equation}
where $\check{\pi}(g) := \pi(^t\bar{g}^{-1}).$

Set
\begin{equation}\label{eqn:Weyl2 for GU}
( \pi_1 \otimes \pi_2 \otimes \pi_3 ) \widetilde{\rtimes} (\pi_4 \otimes \sigma) =\ \check{\pi}_1 \times \pi_2 \times \pi_4 \otimes \pi_3 \rtimes \omega_{\pi_1} \sigma.
\end{equation}

Applying (\ref{eqn:Weyl1 for GU}) and (\ref{eqn:Weyl2 for GU}), we get

\begin{thrm}[Tadi\'{c}'s structure formula for general unitary groups.]\label{thm:Tadic for GU}

For $\pi \in R_{GL}(i)$ and $\sigma \in R_{GU}(n-i)$, the following structure formula holds
$$ \mu^*( \pi \rtimes \sigma ) = \mathfrak{M}^*( \pi ) \widetilde{\rtimes} \mu^*(\sigma).$$

\end{thrm}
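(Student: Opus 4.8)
The plan is to follow the well-established strategy used by Tadi\'{c} in \cite{T1} and in its subsequent generalizations \cite{B, J, K1, K2, HM}, adapting it to the combinatorics of $\textbf{H}_n$. Recall that $\mu^*$ is built from the sum over standard Levi subgroups of the Jacquet modules $r_{\textbf{s}}$, and that $\mathfrak{M}^*$ is the analogous operator on $R_{GL}$ coming from the Zelevinsky comultiplication $m^*$ (via $\mathfrak{M}^*(\pi) = (\mathrm{id}\otimes m^*)\circ(\,\check{}\otimes\mathrm{id})\circ m^*(\pi)$, or whatever normalization the paper fixes just above). The key input is the geometric lemma of Bernstein--Zelevinsky, which computes $r_{\textbf{s}}\circ\operatorname{Ind}_{\textbf{P_t}}^{\textbf{H}_n}$ as a sum over the double coset space $[W_{\Theta_{\textbf{s}}}\backslash W/W_{\Theta_{\textbf{t}}}]$ of induced-then-restricted pieces, twisted by the chosen representatives $w$. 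So the first step is to write down this geometric lemma explicitly for $\pi\rtimes\sigma = \operatorname{Ind}$ from the maximal parabolic with Levi $\textbf{GL}_i(E)\times\textbf{H}_{n-i}(F)$, down to an arbitrary standard parabolic.

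The second step is the parametrization of the relevant double cosets. Here I would invoke the remark in the excerpt that Lemmas 4.1--4.8 of \cite{T1} carry over verbatim, since they depend only on the simple roots, the Weyl group $S_n\rtimes\{\pm1\}^n$, and its action on simple roots --- all of which coincide with the $\textbf{GSp}_{2n}$ case. This gives that every double coset has a unique representative of the form $w = q_n(d,k)_{i_1,i_2}$ for suitable $d,k$ (with $i_1$ the size of the $\textbf{GL}$-block coming from the induced $\pi$ and $i_2$ the size on the Jacquet-module side). The third step is to compute the effect of $w^{-1}$ on a representation $\pi_1\otimes\pi_2\otimes\pi_3\otimes\pi_4\otimes\sigma$ of the appropriate Levi; this is exactly formula (\ref{eqn:Weyl1 for GU}), and the only place the unitary group differs from $\textbf{GSp}_{2n}$ is the contragredient twist $\check{\pi}_3(g) = \pi_3({}^t\bar g^{-1})$ together with the central-character shift $\omega_{\pi_3}\sigma$ coming from the $\lambda$-factor in the Weyl action (\ref{eqn:Weyl group action on Levi for GU}). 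Summing the resulting terms over all admissible $(d,k)$, collecting the $\textbf{GL}$-factors into Zelevinsky products and repackaging via the definition (\ref{eqn:Weyl2 for GU}) of $\widetilde\rtimes$, one recognizes precisely $\mathfrak{M}^*(\pi)\widetilde\rtimes\mu^*(\sigma)$.

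Concretely I would proceed as follows. First reduce to $\pi$ irreducible by linearity of all operators involved. Then fix a target Levi $\textbf{M_s}$ and apply the geometric lemma, indexing the summands by $w = q_n(d,k)_{i_1,i_2}$. For each $w$, the corresponding summand is $\operatorname{Ind}$ (within the $\textbf{GL}$-factors and the smaller $\textbf{H}$) of $w^{-1}\cdot(\text{a Jacquet module of }\pi\otimes\sigma)$; expand the Jacquet module of the tensor factor using that $r$ on a product over a Levi is the tensor product of Jacquet modules, so the $\pi$-part contributes $m^*(\pi)$-type terms and the $\sigma$-part contributes $\mu^*(\sigma)$-type terms. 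Apply (\ref{eqn:Weyl1 for GU}) to each, then use transitivity of induction to merge the induction steps, matching the $\check\pi_1\times\pi_2\times\pi_4$ pattern and the $\omega_{\pi_1}\sigma$ twist in (\ref{eqn:Weyl2 for GU}). Finally sum over $\textbf{s}$ to assemble $\mu^*$ on the left and the full $\widetilde\rtimes$ on the right.

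The main obstacle is bookkeeping: one must check that the index set $\{(d,k) : 0\le d\le\min\{i_1,i_2\},\ \max\{0,(i_1+i_2-n)-d\}\le k\le\min\{i_1,i_2\}-d\}$ matches exactly, with correct multiplicities, the index set produced by expanding $\mathfrak{M}^*(\pi)\widetilde\rtimes\mu^*(\sigma)$ --- i.e.\ that no double coset is missed or double-counted, and that the three $\textbf{GL}$-blocks $g_1,g_2,g_4$ together with the ``hidden'' block $g_3$ land in the right slots of $m^*$. I also need to verify carefully that the central character $\omega_{\pi_3}$ appearing from the unitary Weyl action is consistent with the definition of $\widetilde\rtimes$ in (\ref{eqn:Weyl2 for GU}); this is where the even general unitary case genuinely departs from the symplectic case, so the computation of $w^{-1}\cdot(\pi_1\otimes\pi_2\otimes\pi_3\otimes\pi_4\otimes\sigma)$ --- in particular tracking the ${}^t\bar g^{-1}$ versus ${}^t g^{-1}$ transpose-inverse and the scalar $\lambda$ --- must be done with care, even though each individual identification is routine once set up. Apart from this, the proof is formally identical to Tadi\'{c}'s, so I would state it compactly and refer to \cite{T1} for the parts that transfer unchanged.
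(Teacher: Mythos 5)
Your proposal is correct and follows essentially the same route as the paper: the authors likewise reduce to the Bernstein--Zelevinsky geometric lemma, transfer the double-coset parametrization of Lemmas 4.1--4.8 of \cite{T1} verbatim (since the simple roots and Weyl group action agree with the $\textbf{GSp}_{2n}$ case), compute the action of $w^{-1}=q_n(d,k)_{i_1,i_2}^{-1}$ on the Levi factors to obtain (\ref{eqn:Weyl1 for GU}) with the conjugate-contragredient twist and the $\omega_{\pi_3}$ shift, and then repackage the sum via the definition (\ref{eqn:Weyl2 for GU}) of $\widetilde{\rtimes}$.
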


\begin{lemma}\label{lem:Tadic for GU:example}
Let $\rho$ be an irreducible cuspidal representation of $\textbf{GL}_k(E)$ and $a, b \in \mathbb{R}$ be such that $ b-a \in \mathbb{Z}_{\geq 0}$. Let $\sigma$ be an admissible representation of finite length of $\textbf{H}_n(F)$. Write $\mu^*(\sigma)= \displaystyle\sum\limits_{\pi', \sigma'} \pi' \otimes \sigma'$.
Then $ \mathfrak{M}^*(\delta ([\nu^{a} \rho, \nu^{b} \rho])) = \displaystyle\sum\limits_{i=a-1}^b \displaystyle\sum\limits_{j=i}^b \delta([\nu^{a} \rho, \nu^{i} \rho]) \otimes \delta ([\nu^{j+1} \rho, \nu^{b} \rho]) \otimes \delta ([\nu^{i+1} \rho, \nu^{j} \rho])$ and $ \mu^*(\delta ([\nu^{a} \rho, \nu^{b} \rho]) \rtimes \sigma) = \displaystyle\sum\limits_{i=a-1}^b \displaystyle\sum\limits_{j=i}^b \displaystyle\sum\limits_{\pi', \sigma'} \delta([\nu^{-i}\ ^t \bar{\rho}^{-1}, \nu^{-a}\ ^t \bar{\rho}^{-1} ]) \times \delta ([\nu^{j+1} \rho, \nu^{b} \rho]) \times \pi' \otimes \delta ([\nu^{i+1} \rho, \nu^{j} \rho]) \rtimes \omega \sigma'.$ We omit $\delta ([\nu^{x} \rho, \nu^{y} \rho])$ if $x>y$.
\end{lemma}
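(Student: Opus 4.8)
The plan is to compute $\mathfrak{M}^*(\delta([\nu^a\rho,\nu^b\rho]))$ first and then feed the result into Theorem \ref{thm:Tadic for GU}. Recall that for general linear groups the Zelevinsky/Tadić formula gives
$$
\mathfrak{M}^*(\delta([\nu^a\rho,\nu^b\rho])) = \sum_{i=a-1}^{b}\sum_{j=i}^{b} \delta([\nu^{i+1}\rho,\nu^b\rho]) \otimes \delta([\nu^a\rho,\nu^i\rho]) \otimes \delta([\nu^{i+1}\rho,\nu^j\rho])?
$$
— more precisely, $m^*(\delta([\nu^a\rho,\nu^b\rho])) = \sum_{i=a-1}^{b}\delta([\nu^{i+1}\rho,\nu^b\rho])\otimes\delta([\nu^a\rho,\nu^i\rho])$ is the standard Jacquet module formula for a segment representation, and $\mathfrak{M}^*$ is obtained from $m^*$ by applying $(1\otimes m^*)\circ m^*$ and regrouping the three tensor factors according to the recipe in \eqref{eqn:Weyl2 for GU}. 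So the first step is: apply $m^*$ twice to $\delta([\nu^a\rho,\nu^b\rho])$, obtaining a double sum over cut points indexed by $i$ (first cut) and $j$ (second cut), and verify that the three resulting segments are exactly $\delta([\nu^a\rho,\nu^i\rho])$, $\delta([\nu^{j+1}\rho,\nu^b\rho])$, $\delta([\nu^{i+1}\rho,\nu^j\rho])$ with the index ranges $a-1\le i\le b$, $i\le j\le b$ (degenerate segments with $x>y$ dropped). This is a routine but careful bookkeeping exercise with the combinatorics of segments; the only subtlety is getting the index ranges and the off-by-one shifts right, and checking that the ``missing'' terms really are the ones where a segment degenerates.

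The second step is to substitute this expression for $\mathfrak{M}^*(\delta([\nu^a\rho,\nu^b\rho]))$ into the structure formula $\mu^*(\delta([\nu^a\rho,\nu^b\rho])\rtimes\sigma) = \mathfrak{M}^*(\delta([\nu^a\rho,\nu^b\rho]))\,\widetilde{\rtimes}\,\mu^*(\sigma)$ from Theorem \ref{thm:Tadic for GU}, and then unwind the definition of $\widetilde{\rtimes}$ in \eqref{eqn:Weyl2 for GU}. Writing $\mu^*(\sigma) = \sum_{\pi',\sigma'}\pi'\otimes\sigma'$, and taking the generic term $\delta([\nu^a\rho,\nu^i\rho])\otimes\delta([\nu^{j+1}\rho,\nu^b\rho])\otimes\delta([\nu^{i+1}\rho,\nu^j\rho])$ of $\mathfrak{M}^*$ in the role of $\pi_1\otimes\pi_2\otimes\pi_3$ and $\pi'\otimes\sigma'$ in the role of $\pi_4\otimes\sigma$, formula \eqref{eqn:Weyl2 for GU} produces
$$
\check{\pi}_1\times\pi_2\times\pi_4\ \otimes\ \pi_3\ \rtimes\ \omega_{\pi_1}\sigma'.
$$
Here $\pi_1 = \delta([\nu^a\rho,\nu^i\rho])$, so $\check{\pi}_1 = \delta([\nu^{-i}\,{}^t\bar\rho^{-1},\nu^{-a}\,{}^t\bar\rho^{-1}])$ by the definition $\check\pi(g)=\pi({}^t\bar g^{-1})$ together with the fact that the contragredient of a segment representation $\delta([\nu^a\rho,\nu^i\rho])$ is $\delta([\nu^{-i}\tilde\rho,\nu^{-a}\tilde\rho])$ and the twist by ${}^t\bar{\ }$ replaces $\tilde\rho$ by ${}^t\bar\rho^{-1}$. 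Likewise $\omega_{\pi_1} = \omega_{\delta([\nu^a\rho,\nu^i\rho])}$ is the central character appearing as $\omega$ in the statement. Matching term by term then yields exactly the claimed expression for $\mu^*(\delta([\nu^a\rho,\nu^b\rho])\rtimes\sigma)$, with the convention that factors $\delta([\nu^x\rho,\nu^y\rho])$ with $x>y$ are omitted (these correspond precisely to the boundary values $i=a-1$, $j=i$, or $j=b$ where one segment collapses).

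The main obstacle I anticipate is not any deep structural point but the consistent tracking of three interlocking segments through two applications of $m^*$ and then through the twist-and-permute operation $\widetilde{\rtimes}$: one must be scrupulous about which factor gets dualized (and hence gets the ${}^t\bar\rho^{-1}$ and the sign flip in the exponents), about the precise summation bounds $a-1\le i\le b$ and $i\le j\le b$, and about the degeneracy conventions. A secondary point worth spelling out is the identification of the contragredient of a segment representation and the compatibility of $\check{\ }$ with $\delta([\cdot,\cdot])$, i.e. that $\check{\delta([\nu^a\rho,\nu^i\rho])} = \delta([\nu^{-i}\,{}^t\bar\rho^{-1},\nu^{-a}\,{}^t\bar\rho^{-1}])$; this follows from the known behaviour of the Zelevinsky involution-free duality on segments combined with the definition of $\check{\ }$, but it is the one ``imported'' fact the argument leans on beyond Theorem \ref{thm:Tadic for GU}.
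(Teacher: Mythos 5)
Your proposal is correct and follows exactly the computation the paper intends (the paper states this lemma without proof, treating it as the standard consequence of Theorem \ref{thm:Tadic for GU}): apply the known formula $m^*(\delta([\nu^a\rho,\nu^b\rho]))=\sum_{i=a-1}^{b}\delta([\nu^{i+1}\rho,\nu^b\rho])\otimes\delta([\nu^a\rho,\nu^i\rho])$ twice to get $\mathfrak{M}^*$, then substitute into the structure formula and unwind \eqref{eqn:Weyl2 for GU}, using $\check{\delta([\nu^a\rho,\nu^i\rho])}\cong\delta([\nu^{-i}\,{}^t\bar{\rho}^{-1},\nu^{-a}\,{}^t\bar{\rho}^{-1}])$ and the central-character twist $\omega$. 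Your bookkeeping of the index ranges, the dualized factor, and the degenerate-segment convention matches the statement, so no gap.
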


We recall the definition of the strongly positive representations of $GSpin$ groups.

\begin{definition}\label{Def:sp}
An irreducible representation $\sigma$ of ${\bf H}_n(F)$ is called strongly positive if for every embedding
$$\sigma \hookrightarrow \nu^{s_1}\rho_1 \times \nu^{s_2}\rho_2 \times \cdots \times \nu^{s_k}\rho_k \rtimes \sigma_{cusp}$$
where $\rho_i, i=1, 2, \ldots, k$ are irreducible unitary cuspidal representations of $GL$, $\sigma_{cusp}$ is an irreducible cuspidal representation of ${\bf H}_{n'}(F)$ and $s_i \in \mathbb{R}, i=1, 2, \ldots, k$, then we have $s_i >0$ for each $i$.
\end{definition}

The following lemma is also useful when we explicitly calculate Jacquet modules:

\begin{lemma}
Let $\rho$ be a cuspidal representation of ${\bf GL}_k(E)$ and let $\sigma_{cusp}$ be a cuspidal representation of ${\bf H}_n(F)$. Write $\rho = \nu^{e(\rho)}\rho^u$, where $e(\rho) \in \mathbb{R}$ and $\rho^u$ is a unitary cuspidal representation. If $\rho \rtimes \sigma_{cusp}$ has a strongly positive discrete series subrepresentation, then we have
\begin{enumerate}[(i)]
\item $\widetilde{\rho^u} \cong \bar{\rho^u}$, i.e., $\rho^u$ is conjugate self-dual.
\item $\omega_{\rho}\sigma_{cusp} \cong \sigma_{cusp}$.
\end{enumerate}
\end{lemma}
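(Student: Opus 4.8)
The plan is to reduce everything to the reducibility of $\rho \rtimes \sigma_{cusp}$ and then to read off (i) and (ii) from the explicit Weyl-group action of Section~\ref{Sec:3}. First I would fix a strongly positive discrete series subrepresentation $\sigma \hookrightarrow \rho \rtimes \sigma_{cusp}$ and write $\rho = \nu^{e(\rho)}\rho^u$ with $\rho^u$ unitary cuspidal. Since $\rho^u$ is an irreducible unitary cuspidal representation of a general linear group and $\sigma_{cusp}$ is an irreducible cuspidal representation of an even general unitary group, the embedding $\sigma \hookrightarrow \nu^{e(\rho)}\rho^u \rtimes \sigma_{cusp}$ is of the shape appearing in Definition~\ref{Def:sp} (with $k = 1$), whence $e(\rho) > 0$; in particular $\rho$ is not unitary. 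Next I claim $\rho \rtimes \sigma_{cusp}$ is reducible. If not, then $\sigma \cong \rho \rtimes \sigma_{cusp}$; and since parabolic induction commutes with conjugation by the relative Weyl group in the Grothendieck group, $[\rho \rtimes \sigma_{cusp}] = [\check{\rho} \rtimes \omega_{\rho}\sigma_{cusp}]$, where $\check{\rho} \otimes \omega_{\rho}\sigma_{cusp} = w \cdot (\rho \otimes \sigma_{cusp})$ for the nontrivial element $w$ of the relative Weyl group attached to the maximal parabolic of $\mathbf{H}_n$ with Levi factor isomorphic to $\mathbf{GL}_k(E) \times \mathbf{H}_{n-k}(F)$ (computed from (\ref{eqn:Weyl group action on Levi for GU}), with the conventions of (\ref{eqn:Weyl1 for GU})). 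As $\rho \rtimes \sigma_{cusp}$ is assumed irreducible, so is $\check{\rho} \rtimes \omega_{\rho}\sigma_{cusp}$, and the two are isomorphic; writing $\check{\rho} = \nu^{-e(\rho)}\check{\rho^u}$ with $\check{\rho^u}$ unitary cuspidal and $\omega_{\rho}\sigma_{cusp}$ cuspidal, this produces an embedding $\sigma \hookrightarrow \nu^{-e(\rho)}\check{\rho^u} \rtimes \omega_{\rho}\sigma_{cusp}$ with exponent $-e(\rho) < 0$, contradicting Definition~\ref{Def:sp}. Hence $\rho \rtimes \sigma_{cusp}$ is reducible.

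Now I would invoke the standard reducibility theory for a representation parabolically induced from an irreducible cuspidal representation of a maximal proper Levi subgroup (Harish-Chandra's theory of the $\mu$-function; alternatively, and self-containedly here, one can argue from the structure of $\mu^*(\rho \rtimes \sigma_{cusp})$ given by Theorem~\ref{thm:Tadic for GU} and Lemma~\ref{lem:Tadic for GU:example}, whose $\mathbf{GL}_k(E)$-part is $\rho \otimes \sigma_{cusp} + \check{\rho} \otimes \omega_{\rho}\sigma_{cusp}$, combined with Frobenius reciprocity and Casselman's square-integrability criterion applied to $\sigma$ and to the again square-integrable $\widetilde{\sigma}$). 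Since $e(\rho) \neq 0$, reducibility forces the orbit $\{\nu^s\rho^u \rtimes \sigma_{cusp}\}_{s \in \mathbb{R}}$ to be $w$-stable, and comparing the absolute values of central characters then yields $w \cdot (\rho^u \otimes \sigma_{cusp}) \cong \rho^u \otimes \sigma_{cusp}$. By (\ref{eqn:Weyl group action on Levi for GU}) the left-hand side is $\check{\rho^u} \otimes \omega_{\rho}\sigma_{cusp}$, so equating the two tensor factors gives $\check{\rho^u} \cong \rho^u$ and $\omega_{\rho}\sigma_{cusp} \cong \sigma_{cusp}$. Recalling $\check{\pi}(g) = \pi({}^t\bar{g}^{-1})$, so that $\check{\rho^u} \cong \overline{\widetilde{\rho^u}}$, the first isomorphism reads $\widetilde{\rho^u} \cong \overline{\rho^u}$, which is (i); the second is (ii).

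The step I expect to be the main obstacle is the passage from ``$\rho \rtimes \sigma_{cusp}$ reducible'' to ``the unitary datum $\rho^u \otimes \sigma_{cusp}$ is $w$-invariant''. In the non-unitary range $e(\rho) \neq 0$ this is not the naive $w$-invariance of $\rho \otimes \sigma_{cusp}$ itself (impossible, since $w$ negates the central-character exponent $e(\rho)$), so one must argue at the level of the full unramified-twist orbit; making this rigorous for $\mathbf{H}_n$ rests either on Harish-Chandra's Plancherel-measure machinery or on a careful Jacquet-module argument using Theorem~\ref{thm:Tadic for GU} and Casselman's criterion --- exploiting that $\rho \otimes \sigma_{cusp}$ (with $e(\rho) > 0$) must appear in $r_{(k)}(\sigma)$ while $\check{\rho} \otimes \omega_{\rho}\sigma_{cusp}$ (with negative exponent) cannot, so that it must instead appear in the Jacquet module of the Langlands quotient of $\rho \rtimes \sigma_{cusp}$, together with an analysis of the contragredient $\widetilde{\sigma}$. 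A secondary, bookkeeping point is to track the central-character twist through (\ref{eqn:Weyl1 for GU})--(\ref{eqn:Weyl2 for GU}) carefully enough that the character in (ii) emerges in the normalization used in the statement.
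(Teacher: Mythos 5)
Your proposal is essentially the paper's proof run in contrapositive order: the paper assumes (i) or (ii) fails, cites Lemma 2.1 of \cite{T3} to conclude that $\nu^{\alpha}\rho^u \rtimes \sigma_{cusp}$ is irreducible for every $\alpha$, and then uses exactly your exponent-flipping embedding $\sigma \hookrightarrow \nu^{-e(\rho)}\widetilde{\bar{\rho^u}} \rtimes \sigma_{cusp}$ to contradict strong positivity. In particular, the step you single out as the main obstacle --- that reducibility at a nonzero exponent forces $w$-invariance of the unitary cuspidal datum --- is precisely what that citation supplies, so your outline is correct and needs no genuinely different machinery.
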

\begin{proof}
Let $\sigma$ be a strongly positive subrepresentation of $\nu^{e(\rho)}
\rho^{u} \rtimes \sigma_{cusp}$. Then, $e(\rho) > 0$ since $\sigma$ is
strongly positive. If $(i)$ or $(ii)$ does not hold, then due to Lemma
2.1 in \cite{T3}, $\nu^{\alpha} \rho^{u} \rtimes \sigma_{cusp}$ is irreducible for every $\alpha$. Then we have the following embedding:
$$\sigma \hookrightarrow \nu^{e(\rho)}\rho^u \rtimes \sigma_{cusp} \cong \nu^{-e(\rho)}\widetilde{\bar{\rho^u}} \rtimes \sigma_{cusp}.$$
Since $-e(\rho) < 0$, this contradicts the strong positivity of
$\sigma$.
\end{proof}

\section{Classification of strongly positive representation of even general unitary groups}
\label{Sec:4}

In this section, we classify the strongly positive representation of even general unitary groups. We mostly follow the arguments in \cite{M1} and appendix to \cite{K1} and generalize those to our case.

\subsection{Construction of the map}
\label{Sec:4.1}

In this section, we construct the map from the set of strongly positive representations into certain induced representations. We consider the induced representations of the following form
\begin{equation}\label{eqn:ind of special type}
\delta(\Delta_1) \times \delta(\Delta_2) \times \cdots \times \delta(\Delta_k) \rtimes \sigma_{cusp}
\end{equation}
where $\Delta_1, \Delta_2, \ldots, \Delta_k$ is a sequence of strongly positive segments (See Notation \ref{Sec:2.1} for the definition of strongly positive segments) satisfying $0 < e(\Delta_1) \leq e(\Delta_2) \leq \cdots \leq e(\Delta_k)$ $($we allow $k=0$ here$)$, $\sigma_{cusp}$ an irreducible cuspidal representation of $\textbf{H}_m(F)$.

Then, we show that

\begin{thrm}\label{thm:spds embed special type}
$ $
\begin{enumerate}[(i)]
\item The induced representation $\delta(\Delta_1) \times \delta(\Delta_2) \times \cdots \times \delta(\Delta_k) \rtimes \sigma_{cusp}$ of the form (\ref{eqn:ind of special type}) has a unique irreducible subrepresentation which we denote by $\delta(\Delta_1, \ldots, \Delta_k ; \sigma_{cusp})$.
\item The strongly positive representation can be embedded into induced representation of the form (\ref{eqn:ind of special type})
\end{enumerate}
\end{thrm}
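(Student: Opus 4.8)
The plan is to establish the two parts essentially in tandem, using the Tadi\'c structure formula (Theorem \ref{thm:Tadic for GU}) together with the explicit computation in Lemma \ref{lem:Tadic for GU:example}, and mimicking the inductive scheme of \cite{M1} and the appendix to \cite{K1}. For part (i), the key observation is that when $0 < e(\Delta_1) \leq \cdots \leq e(\Delta_k)$, the induced representation in (\ref{eqn:ind of special type}) has a very constrained Jacquet module. I would proceed by induction on $k$ (the case $k=0$ being trivial since $\sigma_{cusp}$ is irreducible). For the inductive step, I would use Frobenius reciprocity: an irreducible subrepresentation of $\delta(\Delta_1)\times\cdots\times\delta(\Delta_k)\rtimes\sigma_{cusp}$ corresponds to a quotient appearing in the appropriate Jacquet module, and I would argue that the relevant Jacquet module term — the one supported on the "expected" cuspidal data $\nu^{a_1}\rho_1\otimes\cdots\otimes\nu^{b_k}\rho_k\otimes\sigma_{cusp}$ read off from the segments — appears with multiplicity exactly one. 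Uniqueness of the irreducible subrepresentation then follows from a standard argument: any two irreducible subrepresentations would both have to contain this multiplicity-one piece in their Jacquet modules, forcing them to coincide. Applying $\mu^*$ via Theorem \ref{thm:Tadic for GU} and isolating the minimal-exponent term is exactly the tool for this; the key point is that the condition $e(\Delta_i)>0$ on all segments prevents any "mixing" with the $\sigma_{cusp}$ factor that would produce extra copies.

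For part (ii), let $\sigma$ be a strongly positive representation of $\textbf{H}_n(F)$. By the cuspidal support theorem, $\sigma$ embeds into some $\nu^{s_1}\rho_1\times\cdots\times\nu^{s_m}\rho_m\rtimes\sigma_{cusp}$ with all $s_i>0$ (by strong positivity, Definition \ref{Def:sp}) and $\rho_i$ irreducible unitary cuspidal. The goal is to reorganize the $\nu^{s_i}\rho_i$ into segments $\Delta_1,\ldots,\Delta_k$ of the required form. The strategy is: first group the $\nu^{s_i}\rho_i$ according to their inertial class (fix $\rho$, work within the line $\{\nu^x\rho\}$), then within each line use the combinatorics of segments in the theory of Zelevinsky and the structure of $\textbf{GL}$-induced representations to show that the multiset of exponents for each $\rho$ must actually form a union of segments — this is where strong positivity is used crucially, since if the exponents did not line up into segments one could (after applying a suitable exchange in the $\textbf{GL}$-part, which does not change the image of $\sigma$ up to the embedding) produce an embedding with a nonpositive exponent, contradicting Definition \ref{Def:sp}. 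Concretely, I would show that for each $\rho$ the smallest exponent appearing must be followed by $\nu^{\cdot+1}\rho$, etc., by examining what happens under $\delta([\nu^a\rho,\nu^b\rho])$ versus $\nu^a\rho\times\cdots\times\nu^b\rho$ and using that segments $\delta(\Delta)$ are the unique irreducible subrepresentations there; a "shuffle" that broke a segment would, via the last lemma of the excerpt (conjugate self-duality and $\omega_\rho$-invariance) and Lemma 2.1 of \cite{T3}, allow moving a factor across $\sigma_{cusp}$ with a sign flip on the exponent. Finally I would order the resulting segments so that $e(\Delta_1)\leq\cdots\leq e(\Delta_k)$, which is possible because reordering the $\delta(\Delta_i)$ in the $\textbf{GL}$-part only permutes the inducing data and $\sigma$ still embeds (the relevant $\textbf{GL}$-induced representations being, in the generality needed here, rearrangeable because the segments in question are "lined up" and the commuting relations in \cite{Z} apply).

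The main obstacle I anticipate is part (ii), specifically proving that the exponents must genuinely assemble into segments rather than some more chaotic multiset: one has to rule out configurations like $\{\nu^1\rho,\nu^3\rho\}$ (a "gap") or $\{\nu^1\rho,\nu^1\rho,\nu^2\rho\}$ (a "repeat" not extending to a longer segment) for a fixed $\rho$. The delicate step is that to detect such a configuration one must pass to a carefully chosen Jacquet module and track which cuspidal strings survive, then feed the information back through Frobenius reciprocity to produce the contradicting embedding with a nonpositive exponent. This requires the explicit form of $\mu^*$ on segments from Lemma \ref{lem:Tadic for GU:example} — in particular the appearance of the dual factor $\delta([\nu^{-i}\,{}^t\bar\rho^{-1},\nu^{-a}\,{}^t\bar\rho^{-1}])$, whose negative exponents are precisely what strong positivity forbids — and some bookkeeping to guarantee the chosen term actually occurs with nonzero multiplicity in the Jacquet module of $\sigma$ itself. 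Part (i)'s multiplicity-one claim, by contrast, should reduce to a direct if somewhat lengthy application of Theorem \ref{thm:Tadic for GU} and known facts about $\textbf{GL}$-segment representations, so I expect it to be routine once the right Jacquet functor is selected.
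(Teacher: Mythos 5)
Your outline follows essentially the same route as the paper, which proves nothing directly here but simply invokes the proofs of Theorems 3.3 and 3.4 of \cite{M1}: uniqueness of the irreducible subrepresentation via a distinguished term of the Jacquet module computed through the structure formula, and the embedding in (ii) via the cuspidal support, Zelevinsky's segment theory for the $GL$-part, and strong positivity to force positive segment ends and the ordering.

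One concrete soft spot in your argument for (i): the statement (and (\ref{eqn:ind of special type})) only requires $0<e(\Delta_1)\leq\cdots\leq e(\Delta_k)$, so repeated segments are allowed, and then the term you propose to use is \emph{not} of multiplicity one. For instance, if $\Delta_1=\Delta_2=\Delta$, both the cuspidal string $\nu^{b}\rho\otimes\cdots\otimes\nu^{a}\rho\otimes\nu^{b}\rho\otimes\cdots\otimes\nu^{a}\rho\otimes\sigma_{cusp}$ and the term $\delta(\Delta)\otimes\delta(\Delta)\otimes\sigma_{cusp}$ occur at least twice in the relevant Jacquet module (the two trivial shuffles of the two identical blocks), so the multiplicity-one step fails exactly in this allowed configuration; positivity only kills the dualized (negative-exponent) contributions, not these. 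The standard repair is to group the $\delta(\Delta_i)$ with equal $e(\Delta_i)$ into $\nu^{e}$ times an irreducible tempered representation of a general linear group and invoke uniqueness of the Langlands (sub)representation for the resulting standard module, or to restrict attention (as suffices for the classification) to the case of pairwise distinct segments with strictly increasing ends, where your multiplicity-one count does go through. For (ii) your plan works, though it is cleaner to first take an irreducible representation $\pi$ of the $GL$-factor with $\sigma_{sp}\hookrightarrow\pi\rtimes\sigma_{cusp}$ and embed $\pi$ into a product of segment representations by Zelevinsky's classification \cite{Z}, rather than organizing the exponent multiset into segments by hand; strong positivity is then used, exactly as you say, to rule out nonpositive segment beginnings and to permit the reordering by $e(\Delta)$.
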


\begin{proof}
(i) and (ii) are GU analogue of Theorem 3.3 and Theorem 3.4 in \cite{M1}, respectively. Since the idea of their proofs depends on the behavior of $GL$ parts of Jacquet modules, we apply those in \cite{M1} to the case of even general unitary groups and we do not repeat here.
\end{proof}

\subsection{Classification of strongly positive representations: $D(\rho; \sigma_{cusp})$}
\label{Sec:4.2}

Let $\rho$ be a conjugate self-dual irreducible cuspidal representation of $\textbf{GL}_{n_{\rho}}(E)$ and $\sigma_{cusp}$ be an irreducible cuspidal representation of $\textbf{H}_m(F)$. Let $D(\rho;\sigma_{cusp})$ be the set of strongly positive representations whose cuspidal supports are the representation $\sigma_{cusp}$ and twists of the representation $\rho$ by positive valued characters. Let $a \geq 0$ be the unique non-negative real number such that $\nu^a \rho \rtimes \sigma_{cusp}$ reduces \cite{Si}. Furthermore, we assume that this reducibility point $a$ is in $\frac{1}{2} \mathbb{Z}$ (see (HI) of \cite{MT}, page 771). Let $k_{\rho}$ denote $\lceil a \rceil$, the smallest integer which is not smaller than $a$. In this section, we obtain the classification of strongly positive representations in $D(\rho;\sigma_{cusp})$.

In a previous section, Theorem \ref{thm:spds embed special type} implies that every strongly positive representation can be viewed as the unique irreducible subrepresentation of induced representation of the form (\ref{eqn:ind of special type}). Therefore, there exists an mapping from the set of strongly positive representations of $\textbf{H}_n(F)$ into the set of induced representations of the form (\ref{eqn:ind of special type}).

Now we further refine the image of this mapping when we restrict the mapping to $D(\rho;\sigma_{cusp})$.

\begin{thrm}\label{thm:Image of map}

Let $\sigma_{sp}$ be an irreducible strongly positive representation in $D(\rho; \sigma_{cusp})$ and consider it as the unique irreducible subrepresentation of induced representation of the form (\ref{eqn:ind of special type}). Write $\Delta_i = [\nu^{a_i} \rho, \nu^{b_i} \rho]$. Then,
$$a_i=a-k+i,\  b_1 < \ldots < b_k \ and \ k \leq \lceil a \rceil.$$

\end{thrm}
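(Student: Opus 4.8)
The plan is to extract the constraints on the segments $\Delta_i = [\nu^{a_i}\rho, \nu^{b_i}\rho]$ by analyzing the $GL$-part of the Jacquet module of $\sigma_{sp}$ against its embedding into (\ref{eqn:ind of special type}), using strong positivity to forbid non-positive exponents. First I would recall that by Theorem \ref{thm:spds embed special type}(i) we have $\sigma_{sp} \hookrightarrow \delta(\Delta_1) \times \cdots \times \delta(\Delta_k) \rtimes \sigma_{cusp}$ with $0 < e(\Delta_1) \leq \cdots \leq e(\Delta_k)$, and that all $a_i, b_i$ lie in $a + \mathbb{Z}$ since the cuspidal support of $\sigma_{sp}$ consists of $\sigma_{cusp}$ together with positive twists of $\rho$, and $\nu^a\rho \rtimes \sigma_{cusp}$ is the reducibility point. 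The bound $k \leq \lceil a \rceil = k_\rho$ should follow by a counting/positivity argument: each $\Delta_i$ is strongly positive so $a_i > 0$, and since the $a_i$ are congruent mod $1$ and must be distinct at the level of the "initial exponents" (otherwise one could rearrange and find a bad embedding), there are at most $\lceil a \rceil$ admissible starting values in $(0, a]$; the real content is showing $a_i \le a$ and that the $a_i$ are forced to be $a-k+1, a-k+2, \ldots, a$.

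The core of the argument is the identity $a_i = a - k + i$. I would prove this by downward/upward induction on $i$ using Lemma \ref{lem:Tadic for GU:example} and Theorem \ref{thm:Tadic for GU} to compute $\mu^*(\delta(\Delta_1)\times\cdots\times\delta(\Delta_k)\rtimes\sigma_{cusp})$, isolating terms of the form $\nu^{x}\rho \otimes (\cdots)$ with $x$ the minimal exponent appearing. Since $\sigma_{cusp}$ is cuspidal, $\mu^*(\sigma_{cusp}) = 1 \otimes \sigma_{cusp}$, so the formula from Lemma \ref{lem:Tadic for GU:example} simplifies considerably: the only $\sigma'$ that appears is $\sigma_{cusp}$ (up to the $\omega$-twist). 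Strong positivity of $\sigma_{sp}$ forces every irreducible constituent $\pi' \otimes \sigma'$ of $\mu^*(\sigma_{sp})$ with $\pi'$ supported on a single line $\{\nu^x\rho\}$ to have $x > 0$; applying this with the smallest exponent, together with the fact that $\nu^{a-1}\rho \rtimes \sigma_{cusp}$ is \emph{irreducible} (as $a$ is the unique reducibility point and $a - 1 \neq \pm a$ generically, or handled separately when $a \le 1$), pins down where the "tail" segments can start. Concretely, if some $a_i \le a - k + i - 1$ were strictly smaller, one produces via the structure formula an embedding of $\sigma_{sp}$ exhibiting a non-positive or zero exponent, or an embedding into $\nu^{a_i - 1}\rho \times (\cdots) \rtimes \sigma_{cusp}$ that can be rewritten (using irreducibility of $\nu^{a_i-1}\rho \rtimes \sigma_{cusp}$ when $a_i - 1 \ne a$, and the self-duality $\check{\rho} \cong \bar{\rho} \cong \rho$) with the offending factor moved past $\sigma_{cusp}$ to negative position, contradicting Definition \ref{Def:sp}. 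Maximality of the chain, i.e. that equality $a_i = a-k+i$ holds and not just $a_i \ge a - k + i$, comes from the uniqueness of the irreducible subrepresentation and a dimension/support count: the multiset $\{a_1, \ldots, a_k\}$ must be exactly the $k$ consecutive integers $\{a-k+1, \ldots, a\}$, because any gap would again allow an embedding with a factor slidable to a non-positive exponent.

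For $b_1 < \cdots < b_k$: suppose $b_i \ge b_{i+1}$ for some $i$. Since $a_i < a_{i+1}$ and both segments are linked in the sense of Zelevinsky (their exponents are integers apart), $\delta(\Delta_i) \times \delta(\Delta_{i+1})$ is reducible and one can rewrite the induced representation so that $\sigma_{sp}$ embeds instead into $\delta(\Delta_i \cup \Delta_{i+1}) \times \delta(\Delta_i \cap \Delta_{i+1}) \times (\text{rest}) \rtimes \sigma_{cusp}$ (or the analogous rearrangement), and then the shorter segment $\Delta_i \cap \Delta_{i+1}$ — whose starting exponent is now $a_{i+1} = a - k + i + 1$ but which has a strictly smaller span — contradicts the already-established form $a_j = a-k+j$, since there are now effectively $k$ segments but their starting exponents cannot all be the required consecutive values. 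Alternatively and more cleanly, $b_i \ge b_{i+1}$ together with $a_i < a_{i+1}$ means $\Delta_{i+1} \subsetneq \Delta_i$, so $\delta(\Delta_{i+1})$ sits inside $\delta(\Delta_i)$ as a subquotient in a way that lets one replace the pair by a non-strongly-positive pair, again yielding a bad embedding.

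\textbf{Main obstacle.} The delicate point is controlling the bookkeeping in $\mu^*(\delta(\Delta_1)\times\cdots\times\delta(\Delta_k)\rtimes\sigma_{cusp})$: the structure formula produces many terms via the triple sum in Lemma \ref{lem:Tadic for GU:example} iterated $k$ times, and one must argue that the specific "extremal" term needed to run the strong-positivity contradiction actually appears with nonzero multiplicity and is not cancelled — this requires either a careful Jacquet-module-of-a-subrepresentation argument (Frobenius reciprocity plus the fact that $\sigma_{sp}$ is the unique irreducible subrepresentation, so its Jacquet module injects appropriately) or a direct combinatorial tracking of exponents. The conjugate-self-duality $\check\rho \cong \rho$ and the invariance $\omega_\rho \sigma_{cusp} \cong \sigma_{cusp}$ (established in the preceding lemma) are what make the GU case parallel to the $Sp$/$GSpin$ cases of \cite{M1}, so most of the work is verifying that these two facts suffice to carry over each step, rather than discovering new phenomena.
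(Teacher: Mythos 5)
Your outline captures the easy half of the argument (sliding an unlinked cuspidal factor $\nu^{a_1}\rho$ past the other segments and past $\sigma_{cusp}$, inverting it, and contradicting Definition \ref{Def:sp}), but it misses the two points where the paper actually has to work. First, the critical case is when two segments start at the same exponent, in particular $a_1=a_2=a$: there your mechanism breaks down completely, since $\nu^{a}\rho\rtimes\sigma_{cusp}$ is reducible (so the factor cannot be inverted) and the segments $[\nu^{a}\rho,\nu^{b_1}\rho]$, $[\nu^{a}\rho,\nu^{b_2}\rho]$ are unlinked (so no Zelevinsky rearrangement helps). Your parenthetical ``otherwise one could rearrange and find a bad embedding'' is exactly the assertion that needs proof, and it is the heart of both $a_i=a-k+i$ and $k\le\lceil a\rceil$. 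The paper handles it by first deducing from strong positivity that $\nu^{a_1}\rho\rtimes\delta([\nu^{a}\rho,\nu^{b_2}\rho];\sigma_{cusp})$ must be reducible, invoking the GU analogue of Proposition 4.3 of \cite{K1} to force $a_1\in\{a-1,a,b_2+1\}$, and then killing $a_1=a>1/2$ by the Jacquet-module comparison (\ref{thm:Jacquet module 1})--(\ref{thm:Jacquet module 2}) together with the irreducibility of $\nu^{a}\rho\rtimes\delta(\nu^{a}\rho;\sigma_{cusp})$ (GU analogue of Lemma 4.1 of \cite{K1}), which shows the induced representation would be irreducible, a contradiction; the case $a_1=a=1/2$ is genuinely exceptional and requires the rank-one analysis in the appendix of \cite{K1}. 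None of this is replaceable by the positivity-of-minimal-exponent bookkeeping you describe.

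Second, your argument for $b_1<\cdots<b_k$ rests on a false statement: if $a_i<a_{i+1}$ and $b_i\ge b_{i+1}$ then $\Delta_{i+1}\subseteq\Delta_i$, so the two segments are \emph{not} linked and $\delta(\Delta_i)\times\delta(\Delta_{i+1})$ is \emph{irreducible}; there is no decomposition into $\delta(\Delta_i\cup\Delta_{i+1})\times\delta(\Delta_i\cap\Delta_{i+1})$ to exploit, and the ``more cleanly'' variant (replacing the pair by a non-strongly-positive pair) is not an argument. In the paper the inequality $b_1<b_2$ falls out of the case analysis itself: once $a_1=a,\,a_1=b_2+1$ are excluded, only $a_1=a-1$ survives (possible only for $a>1$), and the monotonicity of the $b_i$ is obtained along with it, following \cite{M1}, not by a containment/linkedness manipulation. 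So the proposal, as written, would establish the statement only for configurations where all starting exponents are distinct from $a$ and from each other, and its treatment of the ordering of the $b_i$ does not work; the missing ingredients are precisely the reducibility-point input for $\nu^{s}\rho\rtimes\delta(\Delta;\sigma_{cusp})$ and the exceptional analysis at $a_1=a$ (including $a=1/2$).
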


\begin{proof}
We only consider the Theorem when $a>0$. We use induction as in \cite{M1}. The cases $k=0$ and $k=1$ are exactly as in \cite{M1} and we skip the proof. We now consider the case when $k=2$. Now we have
$$\sigma_{sp} \hookrightarrow \delta([\nu^{a_1} \rho, \nu^{b_1} \rho]) \times \delta([\nu^{a_2} \rho, \nu^{b_2} \rho]) \rtimes \sigma_{cusp} $$
As in the case $k=1$, we easily show that $a_2=a$. Since $\sigma_{sp}$ is the unique irreducible subrepresentation of $\delta([\nu^{a_1} \rho, \nu^{b_1} \rho]) \times \delta([\nu^{a} \rho, \nu^{b_2} \rho]) \rtimes \sigma_{cusp}$ we also have $\sigma_{sp} \hookrightarrow \delta([\nu^{a_1} \rho, \nu^{b_1} \rho]) \rtimes \delta([\nu^{a} \rho, \nu^{b_2} \rho]; \sigma_{cusp})$. This embedding gives us the following embedding
$$\sigma_{sp} \hookrightarrow \delta([\nu^{a_1+1} \rho, \nu^{b_1} \rho]) \times \nu^{a_1}\rho \rtimes \delta([\nu^{a} \rho, \nu^{b_2} \rho]; \sigma_{cusp})$$
If $\nu^{a_1}\rho \rtimes \delta([\nu^{a} \rho, \nu^{b_2} \rho]; \sigma_{cusp})$ is irreducible, we have the embedding $\sigma_{sp} \hookrightarrow \delta([\nu^{a_1+1} \rho, \nu^{b_1} \rho]) \times \nu^{-a_1}\rho \rtimes \delta([\nu^{a} \rho, \nu^{b_2} \rho]; \sigma_{cusp})$ and this contradicts the strong positivity of $\sigma_{sp}$. Therefore, $\nu^{a_1}\rho \rtimes \delta([\nu^{a} \rho, \nu^{b_2} \rho]; \sigma_{cusp})$ is reducible.

GU analogue of Proposition 4.3 \cite{K1} implies that $a_1 \in \{a-1, a, b_2+1 \}$. Let us first consider the case when $a_1=a>1/2$. Similarly as in Proposition 3.1 in \cite{T1}, we use the following calculation of Jacquet modules:
$$r_{GL}(\nu^{a}\rho \rtimes \delta([\nu^{a} \rho, \nu^{b_2} \rho]; \sigma_{cusp}))= \nu^{-a}\widetilde{\bar{\rho}} \times \delta([\nu^{a}\rho, \nu^{b_2}\rho]) \otimes \sigma_{cusp}$$
\begin{equation}\label{thm:Jacquet module 1}
+\nu^{a}\rho \times \delta([\nu^{a}\rho, \nu^{b_2}\rho]) \otimes \sigma_{cusp}
\end{equation}
\begin{equation}\label{thm:Jacquet module 2}
\mu^*(\nu^{a}\rho \rtimes \delta([\nu^{a} \rho, \nu^{b_2} \rho]; \sigma_{cusp})) \geq \delta([\nu^{a+1}\rho, \nu^{b_2}\rho]) \otimes \nu^{a}\rho \rtimes \delta(\nu^{a}\rho, \sigma_{cusp})
\end{equation}
Due to $GU$ analogue of Lemma 4.1 in \cite{K1}, $\nu^{a}\rho \rtimes \delta(\nu^{a}\rho, \sigma_{cusp})$ is irreducible. Therefore, the irreducible subquotient of $\nu^{a}\rho \rtimes \delta([\nu^{a} \rho, \nu^{b_2} \rho]; \sigma_{cusp})$ that contains right hand side of (\ref{thm:Jacquet module 2}) in its Jacquet modules must also contain both terms in (\ref{thm:Jacquet module 1}). This implies that $\nu^{a}\rho \rtimes \delta([\nu^{a} \rho, \nu^{b_2} \rho]; \sigma_{cusp})$ is irreducible, which is a contradiction. Now we consider $a_1=a=1/2$. In this case, $GU$ analogue of Appendix of \cite{K1} (or Lemma 5.7 of \cite{K1}) implies that irreducible subrepresentation of $\delta([\nu^{1/2} \rho, \nu^{b_1} \rho]) \times \delta([\nu^{1/2} \rho, \nu^{b_2} \rho]) \rtimes \sigma_{cusp})$ is not strongly positive, which is a contradiction. Similarly as in \cite{M1}, we have a contradiction in the case $a_1=b_2+1$. The remaining case is when $a_1=a-1$, which is possible only if $a>1$. In that case, we also have $b_1 < b_2$. Completing argument of induction on $k$ is also exactly as in \cite{M1} and we skip the proof.
\end{proof}

We also show that the mapping from $D(\rho;\sigma_{cusp})$ to the set of induced representations of the form (\ref{eqn:ind of special type}) is well defined in the following theorem:

\begin{thrm}\label{thm:map from spds to ind is well defined}

Let $\sigma_{sp}$ be an irreducible strongly positive representation in $D(\rho; \sigma_{cusp})$. Then, there exist a unique set of strongly positive segments $\Delta_1, \Delta_2$, $\ldots, \Delta_k$, with $0 < e(\Delta_1) < e(\Delta_2) < \cdots < e(\Delta_k)$, and
a unique irreducible cuspidal representation $\sigma' \in R$ such that $\sigma_{sp} \simeq \delta(\Delta_1, \Delta_2, \ldots, \Delta_k;\sigma')$.
\end{thrm}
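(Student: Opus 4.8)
The plan is to assemble this theorem from the preceding results, since most of the work has already been done. By Theorem \ref{thm:spds embed special type}(ii), the strongly positive representation $\sigma_{sp}$ embeds into an induced representation of the form (\ref{eqn:ind of special type}), say $\delta(\Delta_1) \times \cdots \times \delta(\Delta_k) \rtimes \sigma_{cusp}$ with $0 < e(\Delta_1) \leq \cdots \leq e(\Delta_k)$; by part (i) of the same theorem, $\sigma_{sp}$ is the unique irreducible subrepresentation, hence $\sigma_{sp} \simeq \delta(\Delta_1, \ldots, \Delta_k; \sigma_{cusp})$. So existence is immediate once we upgrade the weak inequalities $e(\Delta_i) \leq e(\Delta_{i+1})$ to strict inequalities. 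This is exactly what Theorem \ref{thm:Image of map} supplies: writing $\Delta_i = [\nu^{a_i}\rho, \nu^{b_i}\rho]$, that theorem gives $a_i = a - k + i$ and $b_1 < b_2 < \cdots < b_k$, so $e(\Delta_i) = \frac{a_i + b_i}{2}$ is strictly increasing in $i$ because both $a_i$ and $b_i$ are. Thus the segments produced by the construction are automatically pairwise distinct with strictly increasing exponents, and we take $\sigma' = \sigma_{cusp}$.

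The remaining content is uniqueness. First I would reduce to the case where $\sigma_{sp} \in D(\rho; \sigma_{cusp})$ forces the cuspidal piece: if $\sigma_{sp} \simeq \delta(\Delta_1, \ldots, \Delta_k; \sigma')$ for some data, then comparing cuspidal supports (which for $\sigma_{sp} \in D(\rho;\sigma_{cusp})$ consist of $\sigma_{cusp}$ together with positive twists of $\rho$) forces $\sigma' \simeq \sigma_{cusp}$ and each $\Delta_i$ to be a segment in $\rho$; this also pins down which $\rho$ occurs, so $\sigma'$ is unique. Then, given two such representations $\delta(\Delta_1, \ldots, \Delta_k; \sigma_{cusp})$ and $\delta(\Delta_1', \ldots, \Delta_{k'}'; \sigma_{cusp})$ that are isomorphic, I would show the multisets of segments coincide. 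The clean way is to use the explicit description of Theorem \ref{thm:Image of map}: it shows the sequence $(a_i)$ is completely determined by $a$ and $k$, so it suffices to recover $k$ and the $b_i$'s from the isomorphism class of $\sigma_{sp}$. One recovers $k$ and the $b_i$ by reading off the $GL$-part of iterated Jacquet modules of $\sigma_{sp}$: by strong positivity, the only cuspidal exponents appearing are $\nu^a\rho, \nu^{a+1}\rho, \ldots$, and the structure formula of Theorem \ref{thm:Tadic for GU} (concretely, Lemma \ref{lem:Tadic for GU:example}) lets one compute that the largest exponent appearing equals $\max_i b_i = b_k$, the next relevant data equals $b_{k-1}$, and so on; equivalently, one can argue as in \cite{M1} that $\delta(\Delta_1, \ldots, \Delta_k; \sigma_{cusp})$ has a distinguished embedding determined by its Jacquet module and invert the construction. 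Since both representations are equal, these extracted invariants agree, giving $k = k'$ and $\Delta_i = \Delta_i'$ for all $i$.

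The main obstacle is the uniqueness half — specifically, showing that the data $(\Delta_1, \ldots, \Delta_k)$ is genuinely reconstructible from the isomorphism class, not merely that some choice exists. The delicate point is that a priori different sequences of segments could yield isomorphic unique irreducible subrepresentations; ruling this out requires the combinatorial rigidity coming from Theorem \ref{thm:Image of map} ($a_i = a-k+i$ forces the "shape" of the segments) together with a Jacquet-module computation that separates representations with different $b$-sequences. I would carry this out by the same inductive scheme on $k$ used in \cite{M1}: strip off $\delta(\Delta_k)$ (the segment with the largest exponent, which is detected by the top of the Jacquet module filtration), apply the inductive hypothesis to the resulting strongly positive representation of a smaller group in $D(\rho; \sigma_{cusp})$, and conclude. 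The only GU-specific input needed here is that the relevant Jacquet module computations — which involve the contragredient-and-conjugate twist $\check\pi$ and the central-character twist $\omega_{\pi}$ appearing in (\ref{eqn:Weyl1 for GU})–(\ref{eqn:Weyl2 for GU}) — behave for conjugate self-dual $\rho$ exactly as the ordinary contragredient does in the symplectic case, which is precisely the content of Lemma \ref{lem:Tadic for GU:example} and the lemma preceding Definition \ref{Def:sp}; with these in hand, the arguments of \cite{M1} transfer verbatim.
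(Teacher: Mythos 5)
Your proposal is correct and takes essentially the same route as the paper, which simply defers to the inductive Jacquet-module argument of \cite{M1}: existence with strict inequalities follows from Theorem \ref{thm:spds embed special type} together with the rigidity $a_i=a-k+i$, $b_1<\cdots<b_k$ of Theorem \ref{thm:Image of map}, and uniqueness is obtained by recovering $k$ and the $b_i$'s from Jacquet-module (cuspidal support) invariants via the structure formula of Section \ref{Sec:3}. One cosmetic slip: the cuspidal exponents along the $\rho$-line begin at $a-k+1$, which may be smaller than $a$, not at $a$; this does not affect your extraction of $b_k, b_{k-1}, \ldots$ and hence does not harm the argument.
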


\begin{proof}

The proof is similar to \cite{M1} and we, therefore, omit the proof in this case since we constructed all the tools that we need in Section \ref{Sec:3}. 

\end{proof}

In Theorem \ref{thm:Image of map} and Theorem \ref{thm:map from spds to ind is well defined}, we construct an injective mapping from $D(\rho; \sigma_{cusp})$ into the set of induced representations of the form (\ref{eqn:ind of special type}) with refinement on the unitary exponents as in Theorem \ref{thm:Image of map}. More precisely, let $Jord_{(\rho,a)}$ stand for the set of all increasing sequences $b_1, b_2, \ldots, b_{k_{\rho}}$, where $b_i \in \mathbb{R}, b_i - a + k_{\rho} - i \in \mathbb{Z}_{\geq0}$ for $i=1, \ldots, k_{\rho}$ and $-1 < b_1 < b_2 < \cdots < b_{k_{\rho}}$. So far, we construct the following injective mapping:
$$D(\rho;\sigma_{cusp}) \hookrightarrow Jord_{(\rho,a)}$$
Now, it remains to show that this map is surjective. Let $b_1, b_2, \ldots, b_{k_{\rho}}$ denote an increasing sequence appearing in $Jord_{(\rho,a)}$. We showed in Section \ref{Sec:4.1} that the induced representation
\begin{equation}\label{SPDS:IND}
\delta([\nu^{a-k_{\rho}+1} \rho, \nu^{b_1} \rho]) \times \delta([\nu^{a-k_{\rho}+2} \rho, \nu^{b_2} \rho]) \times \cdots \times \delta([\nu^{a} \rho, \nu^{b_{k_{\rho}}} \rho]) \rtimes \sigma_{cusp}
\end{equation}
has a unique irreducible subrepresentation, which we denote by $\sigma_{(b_1, \ldots, b_{k_{\rho}} ;a)}$. \\

We apply the induction argument in \cite{M1} to show that the above subrepresentation is strongly positive and we do not repeat the argument here.

\begin{thrm}\label{thm:map is surjective}

The representation $\sigma_{(b_1, \ldots, b_{k_{\rho}};a)}$ is strongly positive.

\end{thrm}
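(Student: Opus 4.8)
The plan is to show that $\sigma_{(b_1, \ldots, b_{k_{\rho}};a)}$ is strongly positive by induction on $k_{\rho}$, mimicking the argument in \cite{M1}, the key tool being the explicit Jacquet module computation provided by the Tadi\'c structure formula (Theorem \ref{thm:Tadic for GU}) together with Lemma \ref{lem:Tadic for GU:example}. Recall that a representation fails to be strongly positive precisely when, along some iterated Jacquet module, one reaches an irreducible $GL$-quotient $\nu^{s}\rho$ with $s \leq 0$. So the strategy is: suppose $\sigma_{(b_1, \ldots, b_{k_{\rho}};a)}$ embeds into $\nu^{s_1}\rho_1 \times \cdots \times \nu^{s_r}\rho_r \rtimes \sigma'_{cusp}$; by the cuspidal support constraint defining $D(\rho;\sigma_{cusp})$ we may take each $\rho_i$ to be $\rho$ (or $\bar{\rho}$-conjugate), and we must rule out $s_i \le 0$ for any $i$. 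Equivalently, I would show that no term of the form $\nu^{s}\rho \otimes (\text{rest})$ with $s \leq 0$ appears in $\mu^*(\sigma_{(b_1, \ldots, b_{k_{\rho}};a)})$.

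First I would handle the base cases $k_{\rho}=0$ (trivial, $\sigma_{cusp}$ itself) and $k_{\rho}=1$: here $\sigma_{(b_1;a)}$ is the unique irreducible subrepresentation of $\delta([\nu^a\rho,\nu^{b_1}\rho]) \rtimes \sigma_{cusp}$, and using the $GU$-analogue of Lemma 4.1/4.2 of \cite{K1} (reducibility of $\nu^a\rho\rtimes\sigma_{cusp}$ at the point $a$) one checks directly that this subrepresentation is a strongly positive discrete series; this is exactly as in \cite{M1}. For the inductive step, I would use the fact (from the construction in Section \ref{Sec:4.1}) that $\sigma_{(b_1,\ldots,b_{k_{\rho}};a)}$ embeds into $\delta([\nu^{a-k_{\rho}+1}\rho,\nu^{b_1}\rho]) \rtimes \sigma_{(b_2,\ldots,b_{k_{\rho}};a)}$, where by the induction hypothesis the second factor is already strongly positive. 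Applying $\mu^*$ via Theorem \ref{thm:Tadic for GU} and Lemma \ref{lem:Tadic for GU:example}, every irreducible constituent $\pi\otimes\theta$ of $\mu^*(\sigma_{(b_1,\ldots,b_{k_{\rho}};a)})$ has $\pi$ a subquotient of $\delta([\nu^{i+1}\rho,\nu^{b_1}\rho]) \times \delta([\nu^{-j}\,{}^t\bar\rho^{-1}, \nu^{-(a-k_{\rho}+1)}\,{}^t\bar\rho^{-1}]) \times \pi'$ for $\pi'\otimes\theta'$ a constituent of $\mu^*(\sigma_{(b_2,\ldots,b_{k_{\rho}};a)})$; the key point is that the ``problematic'' dual segment $\delta([\nu^{-j}\,{}^t\bar\rho^{-1},\ldots])$ only survives when its corresponding segment in $\mathfrak M^*$ is empty, i.e. when the cut index $i$ equals $a-k_{\rho}$, forcing the contribution to in fact come purely from the positive segment $\delta([\nu^{a-k_{\rho}+1}\rho,\nu^{b_1}\rho])$ and from the (inductively strongly positive) $\sigma_{(b_2,\ldots,b_{k_{\rho}};a)}$. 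Combining these with the condition $-1 < b_1 < b_2 < \cdots$ ensures every $GL$-exponent arising is strictly positive.

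The main obstacle I anticipate is bookkeeping the exponents carefully enough to exclude the borderline cases: one must verify that when a dual segment $\delta([\nu^{-j}\,{}^t\bar\rho^{-1},\ldots])$ (which carries negative exponents) genuinely appears with nonzero multiplicity, the remaining requirement that the residual piece $\theta$ lie in $R_{GU}$ of the correct size, together with strong positivity of $\sigma_{(b_2,\ldots,b_{k_{\rho}};a)}$, is actually violated — so that such terms cannot occur. This is the analogue of the square-integrability/positivity exponent analysis in \cite{M1}, and the only genuinely new ingredient here is the twist by the central character $\omega_{\pi_3}$ and the conjugation ${}^t\bar{g}^{-1}$ appearing in \eqref{eqn:Weyl1 for GU}–\eqref{eqn:Weyl2 for GU}; since $\rho$ is conjugate self-dual, $\check{\rho}\cong\bar\rho$ has the same unitary part as $\rho$, so the exponent arithmetic is formally identical to the $GSpin$ and $GSp$ cases, and the argument of \cite{M1} transfers verbatim once this identification is recorded. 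I would therefore state that the induction goes through exactly as in \cite{M1}, citing the structure formula established in Section \ref{Sec:3} as the needed replacement for the corresponding formula there, and omit the routine exponent chase.
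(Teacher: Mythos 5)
Your proposal matches the paper's treatment: the paper proves this theorem simply by invoking the induction argument of \cite{M1}, transferred to even general unitary groups via the structure formula of Section \ref{Sec:3} and Lemma \ref{lem:Tadic for GU:example}, which is exactly the route you outline (base cases $k_{\rho}\le 1$, then peeling off the first segment and an exponent analysis of $\mu^*$ for the unique irreducible subrepresentation). Your sketch is correct in substance, so no further comparison is needed; only note in passing that for conjugate self-dual $\rho$ one has $\check{\rho}=\widetilde{\bar{\rho}}\cong\rho$, which is what makes the exponent bookkeeping identical to the $GSp$/$GSpin$ setting.
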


\subsection{Classification of strongly positive representations}
\label{Sec:4.3}

Let $\rho_i$ be a conjugate self-dual irreducible cuspidal representation of $\textbf{GL}_{n_{\rho_i}}(E)$ for $i=1, \ldots, k$ and $\sigma_{cusp}$ is an irreducible cuspidal representation of $\textbf{H}_m(F)$. Let $D(\rho_1, \rho_2, \ldots, \rho_k; \sigma_{cusp})$ be the set of strongly positive representations whose cuspidal supports are the representation $\sigma_{cusp}$ and the twists of the representations $\rho_i$ by positive valued characters for $i=1, \ldots, k$. Let $a_{\rho_i} \geq 0$ be the unique non-negative real number such that $\nu^{a_{\rho_i}} \rho_i \rtimes \sigma_{cusp}$ reduces for each $i=1, \ldots, k$ \cite{Si}. Furthermore, we assume that this reducibility point $a_{\rho_i}$ is in $\frac{1}{2} \mathbb{Z}$ (see (HI) of \cite{MT}, page 771).

With Theorem \ref{thm:spds embed special type}, we use induction to prove the following two theorems as in \cite{M1}:

\begin{thrm}\label{thm:(general)spds embedding}
Let $\sigma_{sp}$ be a strongly positive representation in $D(\rho_1, \rho_2, \ldots, \rho_k$; $\sigma_{cusp})$. Then $\sigma_{sp}$ can be considered the unique irreducible subrepresentation of the following induced representation:
\begin{equation}\label{eqn:description of ind}
(\displaystyle\prod\limits_{i=1}^{k} \displaystyle\prod\limits_{j=1}^{k_i} \delta([\nu^{a_{\rho_i}- k_i +j} \rho_i , \nu^{b_j^{(i)}} \rho_i]))
\rtimes \sigma_{cusp},
\end{equation}
where $k_i \in \mathbb{Z}_{\geq0}$, $k_i \leq \lceil a_{\rho_i} \rceil, b_j^{(i)} > 0$ such that $b_j^{(i)} - a_{\rho_i} \in \mathbb{Z}_{\geq0}$, for
$i=1, \ldots, k$ $j=1, \ldots, k_i$. Also, $b_j^{(i)} < b_{j+1}^{(i)}$ for $1 \leq j \leq k_i -1$.

\end{thrm}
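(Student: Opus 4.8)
The plan is to reduce Theorem~\ref{thm:(general)spds embedding} to the single-cuspidal-support case already settled in Theorems~\ref{thm:Image of map}, \ref{thm:map from spds to ind is well defined} and \ref{thm:map is surjective}. The key observation is that the cuspidal support of $\sigma_{sp}$ splits into ``blocks'' indexed by the inequivalence classes $\rho_1,\dots,\rho_k$: a representation of $\textbf{GL}$ in the support is always a positive twist of exactly one $\rho_i$, and twists of different $\rho_i$'s ``do not interact'' in the sense that the corresponding induced representations commute up to irreducibility. So first I would invoke Theorem~\ref{thm:spds embed special type}(ii) to embed
$$\sigma_{sp} \hookrightarrow \delta(\Delta_1) \times \cdots \times \delta(\Delta_m) \rtimes \sigma_{cusp},$$
where the $\Delta_j$ are strongly positive segments with non-decreasing exponents. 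Each segment $\Delta_j$ is built from (twists of) a single $\rho_{i(j)}$, since a segment by definition is supported on a single cuspidal line $\{\nu^x\rho : x\in\mathbb{R}\}$ and the $\rho_i$ are pairwise inequivalent (and none is a positive twist of another, as they are all unitary).

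Next I would group the factors $\delta(\Delta_j)$ according to which $\rho_i$ they come from. Using the standard commutation fact — for cuspidal $\rho,\rho'$ on distinct lines that are not conjugate-dual to each other in a matching way, $\delta(\Delta)\times\delta(\Delta')\cong\delta(\Delta')\times\delta(\Delta)$, and more relevantly $\delta(\Delta)\rtimes\tau$ is irreducible and isomorphic to its dual-twisted version whenever $\Delta$ involves a $\rho_i$ different from those in the cuspidal support of the $\textbf{H}$-part $\tau$ — I would rearrange the induced representation so that all segments coming from $\rho_1$ appear first, then all from $\rho_2$, and so on. Since $\sigma_{sp}$ is the unique irreducible subrepresentation and induction is transitive, this gives an embedding
$$\sigma_{sp}\hookrightarrow \left(\prod_{j}\delta(\Delta_j^{(1)})\right)\times\cdots\times\left(\prod_{j}\delta(\Delta_j^{(k)})\right)\rtimes\sigma_{cusp}.$$
Then, peeling off blocks one at a time from the right, the representation
$\left(\prod_j\delta(\Delta_j^{(k)})\right)\rtimes\sigma_{cusp}$ has a unique irreducible (and in fact strongly positive) subrepresentation lying in $D(\rho_k;\sigma_{cusp})$ by the results of Section~\ref{Sec:4.2}, so by Theorems~\ref{thm:Image of map} and \ref{thm:map from spds to ind is well defined} its segments have exponents of the prescribed shape $a_{\rho_k}-k_k+j$ with strictly increasing right endpoints $b_j^{(k)}$ and $k_k\leq\lceil a_{\rho_k}\rceil$; iterating the argument down through $\rho_{k-1},\dots,\rho_1$, applied to the strongly positive subrepresentation of $\textbf{H}_{m'}(F)$ produced at each stage, yields exactly the desired form~(\ref{eqn:description of ind}).

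The routine verification that $\sigma_{sp}$ is the \emph{unique} irreducible subrepresentation of~(\ref{eqn:description of ind}) follows as in Theorem~\ref{thm:spds embed special type}(i) once the segments are ordered with non-decreasing $e(\Delta)$ (order the blocks by reducibility points, and within a block keep the internal order from Section~\ref{Sec:4.2}), so I would only sketch it. The main obstacle is justifying the block decomposition rigorously: one must check that segments attached to different $\rho_i$ genuinely commute past each other and past the $\textbf{H}$-part without creating new subquotients that could break the ``unique irreducible subrepresentation'' bookkeeping — this rests on the irreducibility of $\delta(\Delta)\rtimes\tau$ when $\Delta$ is on a line unrelated to the support of $\tau$, which in turn uses the Tadi\'c structure formula (Theorem~\ref{thm:Tadic for GU}) together with Lemma~\ref{lem:Tadic for GU:example} to see that the $\mu^*$ computations decouple across the distinct cuspidal lines. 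Everything else is a direct transcription of the inductive arguments in \cite{M1}, which is why I would state that the details parallel the known case and not reproduce them in full.
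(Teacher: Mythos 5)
Your overall plan---use Theorem \ref{thm:spds embed special type}(ii), group the segments into blocks according to the cuspidal line, and reduce to the single-line results of Section \ref{Sec:4.2} by an induction in which distinct lines do not interact---is exactly the reduction the paper intends (the paper itself only invokes induction ``as in \cite{M1}''). However, two of your supporting claims do not hold as stated. The assertion that $\delta(\Delta)\rtimes\tau$ is irreducible whenever the line of $\Delta$ avoids the $GL$-cuspidal support of $\tau$ is false: reducibility of such an induced representation is controlled by the partial cuspidal support $\sigma_{cusp}$ of $\tau$ through the reducibility point $a_{\rho}$ of $\nu^{a_{\rho}}\rho\rtimes\sigma_{cusp}$, not only by the $GL$-support of $\tau$. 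For instance $\delta([\nu^{a_{\rho}}\rho,\nu^{b}\rho])\rtimes\sigma_{cusp}$ is reducible (this is precisely why strongly positive subrepresentations exist) although $\rho$ does not occur in the support of $\sigma_{cusp}$. The block rearrangement itself only needs commutation of the $GL$ factors (segments on distinct lines are unlinked, so $\delta(\Delta)\times\delta(\Delta')\cong\delta(\Delta')\times\delta(\Delta)$), but since you explicitly make the decoupling ``rest on'' this irreducibility statement, that justification must be replaced, e.g.\ by the Jacquet-module decoupling coming from Theorem \ref{thm:Tadic for GU} and Lemma \ref{lem:Tadic for GU:example}.

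The more serious gap is in the peeling step, which is circular as written. From $\sigma_{sp}\hookrightarrow(\text{blocks }1,\dots,k)\rtimes\sigma_{cusp}$ one only obtains $\sigma_{sp}\hookrightarrow(\text{blocks }1,\dots,k-1)\rtimes\tau$ for some irreducible \emph{subquotient} $\tau$ of $\bigl(\prod_{j}\delta(\Delta_j^{(k)})\bigr)\rtimes\sigma_{cusp}$, not necessarily its unique irreducible subrepresentation; and Section \ref{Sec:4.2} does not tell you that this $\tau$ is strongly positive, because Theorem \ref{thm:map is surjective} guarantees strong positivity only once the exponents already have the prescribed form---which is exactly what you are trying to prove for the block. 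What is needed, and what the induction of \cite{M1} actually supplies, is the converse transfer: if $\tau$ admitted an embedding with a non-positive exponent, that exponent lies on the line of $\rho_k$, hence commutes past the segments of the remaining blocks, and $\sigma_{sp}$ would inherit an embedding violating its strong positivity; only after this is established do Theorems \ref{thm:Image of map} and \ref{thm:map from spds to ind is well defined} apply to $\tau$ and give the shape of the exponents $a_{\rho_k}-k_k+j$ and $b^{(k)}_1<\cdots<b^{(k)}_{k_k}$. With these repairs (and the reordering remark you make for the uniqueness of the irreducible subrepresentation of (\ref{eqn:description of ind}), which is fine), your argument does go through along the same lines as the paper's intended proof.
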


Theorem \ref{thm:(general)spds embedding} implies that we construct the mapping from $D(\rho_1, \rho_2, \ldots, \rho_k$; $\sigma_{cusp})$ to the set of induced representations of the form (\ref{eqn:ind of special type}). We now show that this mapping is well defined and injective.

\begin{thrm}\label{thm:(general)map is well defined}
Suppose that the representation $\sigma_{sp} \in D(\rho_1, \rho_2, \ldots, \rho_k; \sigma_{cusp})$ can be embedded as the unique irreducible subrepresentations of both representations $(\displaystyle\prod\limits_{i=1}^{k} \displaystyle\prod\limits_{j=1}^{k_i} \delta([\nu^{a_{\rho_i}- k_i +j}$ $\rho_i,$ $\nu^{b_j^{(i)}} \rho_i])) \rtimes \sigma_{cusp}$ and $(\displaystyle\prod\limits_{i=1}^{k'} \displaystyle\prod\limits_{j=1}^{k_i'} \delta([\nu^{a_{\rho_i'}- k_i' +j} \rho_i'$, $\nu^{c_{j}^{(i)}} \rho_i'])) \rtimes \sigma_{cusp}'$ as in Theorem \ref{thm:(general)spds embedding}.
Then we have $k=k', \sigma_{cusp} \cong \sigma_{cusp}'$ and $\lbrace \displaystyle\prod\limits_{j=1}^{k_i} \delta([\nu^{a_{\rho_i}- k_i +j} \rho_i , \nu^{b_j^{(i)}} \rho_i]) |$ $i = 1, \ldots, k \rbrace$ is a permutation of $\lbrace \displaystyle\prod\limits_{j=1}^{k_i'} \delta([\nu^{a_{\rho_i'}- k_i' +j} \rho_i'$, $\nu^{c_{j}^{(i)}} \rho_i']) | i=1, \ldots, k \rbrace $.
\end{thrm}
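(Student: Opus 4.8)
The plan is to reduce Theorem \ref{thm:(general)map is well defined} to the single-cuspidal-support case treated in Section \ref{Sec:4.2}, using the fact that the $GL$-parts of Jacquet modules of a strongly positive representation decompose cleanly along the cuspidal lines determined by the $\rho_i$. First I would recall that for $\sigma_{sp}$ strongly positive, every irreducible constituent $\pi'\otimes\sigma'$ of $\mu^*(\sigma_{sp})$ has $\pi'$ supported on $\{\nu^x\rho_i : x>0,\ i=1,\ldots,k\}$ (up to twist) and $\sigma'$ cuspidal; this follows from Theorem \ref{thm:spds embed special type} together with the structure of the induced representation (\ref{eqn:description of ind}). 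The key point is that two inequivalent unitary cuspidal representations $\rho_i,\rho_j$ (or non-$\mathbb{Z}$-linked twists) give ``independent'' lines, so the multiset of segments attached to a given line $(\rho_i,a_{\rho_i})$ is intrinsic to $\sigma_{sp}$, not to the chosen embedding.

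The main steps, in order: (1) Show $\sigma_{cusp}\cong\sigma_{cusp}'$ by taking the minimal (or maximal) element in the appropriate ordering of $\mu^*(\sigma_{sp})$ — the fully-reduced term forces $\sigma_{cusp}$ to appear as the $\mathbf{H}$-part, and strong positivity (Definition \ref{Def:sp}) rules out any other cuspidal from appearing there; this is the $k=0$ base case, essentially the uniqueness in Theorem \ref{thm:spds embed special type}. (2) Fix one cuspidal $\rho=\rho_i$ and apply an appropriate Jacquet functor / the projection of $\mu^*(\sigma_{sp})$ onto the part supported (on its $GL$-side) only on twists of $\rho$; using Theorem \ref{thm:Tadic for GU} and Lemma \ref{lem:Tadic for GU:example} to compute $\mu^*$ of (\ref{eqn:description of ind}), show that this projection isolates exactly the factor $\prod_{j=1}^{k_i}\delta([\nu^{a_{\rho_i}-k_i+j}\rho_i,\nu^{b_j^{(i)}}\rho_i])\rtimes(\text{something strongly positive in }D(\rho;\sigma_{cusp}))$. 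Concretely, one embeds $\sigma_{sp}$ via (\ref{eqn:description of ind}), rearranges the $GL$-factors so that all twists of $\rho_i$ come last, and then the uniqueness part of Theorem \ref{thm:spds embed special type} gives an embedding $\sigma_{sp}\hookrightarrow(\text{rest})\rtimes\delta(\Delta_1^{(i)},\ldots,\Delta_{k_i}^{(i)};\sigma_{cusp})$; the tail is a strongly positive member of $D(\rho_i;\sigma_{cusp})$, hence Theorems \ref{thm:Image of map}, \ref{thm:map from spds to ind is well defined}, \ref{thm:map is surjective} pin down its segments and their number $k_i\le\lceil a_{\rho_i}\rceil$ uniquely. (3) Doing this for each line, and observing that the segments associated to distinct lines occupy disjoint parts of the cuspidal support, recovers the whole multiset $\{\prod_j\delta([\nu^{a_{\rho_i}-k_i+j}\rho_i,\nu^{b_j^{(i)}}\rho_i])\}$ from $\sigma_{sp}$, independently of the presentation; comparing the two given presentations then forces $k=k'$ and the permutation statement.

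The hard part will be step (2): justifying rigorously that one may \emph{first} peel off all the $\rho_i$-twists into a tail of the form $\delta(\Delta_1^{(i)},\ldots,\Delta_{k_i}^{(i)};\sigma_{cusp})$ while keeping the tail strongly positive and within $D(\rho_i;\sigma_{cusp})$. This requires a careful bookkeeping argument on the Jacquet module computation — one must verify, via Theorem \ref{thm:Tadic for GU} and the explicit form in Lemma \ref{lem:Tadic for GU:example}, that no ``cross terms'' between different cuspidal lines survive in the relevant projection (here one uses that $\nu^x\rho_i$ and $\nu^y\rho_j$ are not linked when $\rho_i\not\cong\rho_j$, so segments from different lines do not interact under $\times$), and that the non-negativity of all exponents $a_{\rho_i}-k_i+j$ together with strong positivity prevents the tail from slipping outside the allowed family. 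Since, as the authors note, this bookkeeping is the exact analogue of the corresponding argument in \cite{M1} (and the GSpin appendix to \cite{K1}) and all the structural tools have been assembled in Section \ref{Sec:3}, once step (2) is set up the remaining deductions are formal. I would therefore present the proof by first establishing the line-decomposition of $\mu^*(\sigma_{sp})$ as a lemma, then running the three steps above, citing \cite{M1} for the routine parts of the inductive bookkeeping.
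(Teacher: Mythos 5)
Your plan is essentially the paper's own argument: the paper deduces $\sigma_{cusp}\cong\sigma_{cusp}'$ and $\{\rho_i'\}\subseteq\{\rho_i\}$ directly from the definition of $D(\rho_1,\ldots,\rho_k;\sigma_{cusp})$ and then obtains $k=k'$ and the permutation claim by precisely the Jacquet-module comparison along the separate cuspidal lines (following \cite{M1}) that you spell out in steps (2)--(3). The only quibble is your preliminary assertion that every constituent $\pi'\otimes\sigma'$ of $\mu^*(\sigma_{sp})$ has $\sigma'$ cuspidal -- this is true only for the terms with cuspidal right-hand factor, which is all your argument actually uses.
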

\begin{proof}
Since $\sigma_{sp} \in D(\rho_1, \rho_2, \ldots, \rho_k; \sigma_{cusp})$, $\sigma_{cusp}' \cong \sigma_{cusp}$ and $\{ \rho_i' | i=1, \ldots, k'\} \subset \{ \rho_i | i=1, \ldots, k \}$. Then, comparing the Jacquet modules, we easily see that $k=k'$ and $\lbrace \displaystyle\prod\limits_{j=1}^{k_i} \delta([\nu^{a_{\rho_i}- k_i +j} \rho_i , \nu^{b_j^{(i)}} \rho_i]) | i = 1, \ldots, k \rbrace$ is a permutation of $\lbrace \displaystyle\prod\limits_{j=1}^{k_i'} \delta([\nu^{a_{\rho_i'}- k_i' +j} \rho_i' $, $\nu^{c_{j}^{(i)}} \rho_i']) | i=1, \ldots k \rbrace$.
\end{proof}

Now we extend the above mapping to the set of all strongly positive representations of ${\bf H}(F)$. We first show the uniqueness of partial cuspidal support of strongly positive representation.
\begin{proposition}
 Let $\sigma_{sp}$ denote a strongly positive representation of ${\bf H}_n(F)$. Then there is a unique, up to isomorphism, cuspidal representation $\sigma_{cusp}$ of ${\bf H}_m(F)$ such that $\sigma_{sp}$ is a subrepresentation of $\pi \rtimes \sigma_{cusp}$, for an irreducible representation $\pi$ of $GL_{n-m}(E)$.
 \end{proposition}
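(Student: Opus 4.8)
The plan is to treat existence and uniqueness separately, with uniqueness being the real content. Uniqueness will follow from the standard fact that the cuspidal support of an irreducible representation is well defined up to association, combined with the explicit description of $W(\mathbf{H}_n)$ recorded in Section~\ref{Sec:3}: namely, that the Weyl group of $\mathbf{H}_n$ never touches the $\mathbf{H}$-component of a Levi subgroup.

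For existence, I would use Theorem~\ref{thm:spds embed special type}. Part (ii) embeds $\sigma_{sp}$ into a representation $\Pi\rtimes\sigma_{cusp}$ of the form (\ref{eqn:ind of special type}), where $\Pi=\delta(\Delta_1)\times\cdots\times\delta(\Delta_k)$ is a representation of $\mathbf{GL}_{n-m}(E)$ (with $\sigma_{cusp}$ an irreducible cuspidal representation of $\mathbf{H}_m(F)$), and part (i) asserts that $\Pi\rtimes\sigma_{cusp}$ has $\sigma_{sp}$ as its \emph{unique} irreducible subrepresentation. Now choose any irreducible subrepresentation $\pi\hookrightarrow\Pi$ (the socle of $\Pi$ is nonzero since $\Pi$ has finite length); by exactness of $-\rtimes\sigma_{cusp}$ we get a nonzero embedding $\pi\rtimes\sigma_{cusp}\hookrightarrow\Pi\rtimes\sigma_{cusp}$, so $\pi\rtimes\sigma_{cusp}$ contains an irreducible subrepresentation of $\Pi\rtimes\sigma_{cusp}$, which must be $\sigma_{sp}$. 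Hence $\sigma_{sp}\hookrightarrow\pi\rtimes\sigma_{cusp}$ with $\pi$ irreducible, as desired.

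For uniqueness, suppose $\sigma_{sp}\hookrightarrow\pi_1\rtimes\sigma_1$ and $\sigma_{sp}\hookrightarrow\pi_2\rtimes\sigma_2$ with $\pi_i$ an irreducible representation of $\mathbf{GL}_{n-m_i}(E)$ and $\sigma_i$ an irreducible cuspidal representation of $\mathbf{H}_{m_i}(F)$. Replacing each $\pi_i$ by its cuspidal support $\rho_1^{(i)}\otimes\cdots\otimes\rho_{k_i}^{(i)}$ and using transitivity of induction, $\sigma_{sp}$ is a subquotient of a representation of $\mathbf{H}_n(F)$ induced from $\rho_1^{(i)}\otimes\cdots\otimes\rho_{k_i}^{(i)}\otimes\sigma_i$ on a Levi subgroup $\mathbf{GL}_{*}(E)\times\cdots\times\mathbf{GL}_{*}(E)\times\mathbf{H}_{m_i}(F)$ of $\mathbf{H}_n$. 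By uniqueness of the cuspidal support of $\sigma_{sp}$ (Bernstein--Zelevinsky, cf.\ \cite{ZB}), these two (Levi, cuspidal) data are conjugate under $W(\mathbf{H}_n)$. The key observation is then that, by the formulas in Section~\ref{Sec:3} for the action of $W(\mathbf{H}_n)$ on a Levi $\mathbf{GL}_{n_1}(E)\times\cdots\times\mathbf{GL}_{n_k}(E)\times\mathbf{H}_{n-n'}(F)$ — permutations only permute the $\mathbf{GL}$-blocks and the sign elements act on a $\mathbf{GL}$-block by $g_i\mapsto\lambda\,{}^{t}\bar{g}_i^{-1}$ (see (\ref{eqn:Weyl group action on Levi for GU})) — every element of $W(\mathbf{H}_n)$ fixes the $\mathbf{H}_{n-n'}(F)$-component. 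Hence the conjugating element fixes the $\mathbf{H}$-block, forcing $m_1=m_2$ and $\sigma_1\cong\sigma_2$.

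I do not expect a substantial obstacle here: the argument is bookkeeping with cuspidal supports. The only mild subtlety is the existence half — obtaining an \emph{irreducible} $\pi$ rather than a product of segment representations — which is why I would route it through the uniqueness of the irreducible subrepresentation in Theorem~\ref{thm:spds embed special type}(i) rather than directly through transitivity of induction. I note also that strong positivity is not logically needed for the uniqueness step as phrased above; however, one may use it to give a slicker conclusion, since by Definition~\ref{Def:sp} the cuspidal support of $\sigma_{sp}$ admits a representative whose $\mathbf{GL}$-exponents are all positive, and a sign element of $W(\mathbf{H}_n)$ would send such a positive exponent to a negative one, so the conjugating element must be a pure permutation, which trivially preserves the $\mathbf{H}$-block.
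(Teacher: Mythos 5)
Your existence argument is fine (the paper does not even spell it out), but the main uniqueness argument has a genuine error: the claim that ``every element of $W(\mathbf{H}_n)$ fixes the $\mathbf{H}_{n-n'}(F)$-component'' is false at the level of representations for the \emph{similitude} group $\mathbf{GU}(n,n)$. The sign element acts on the Levi by $g_i\mapsto\lambda\,{}^{t}\bar g_i^{-1}$, where $\lambda$ is the similitude factor of the element $g$ of the $\mathbf{H}$-block, so although $g$ itself is unchanged in (\ref{eqn:Weyl group action on Levi for GU}), the induced action on representations twists the $\mathbf{H}$-component by a character: this is exactly the $\omega_{\pi_3}\sigma$ in (\ref{eqn:Weyl1 for GU}) and the $\omega_{\pi_1}\sigma$ in (\ref{eqn:Weyl2 for GU}), the same phenomenon as for $GSp$ versus $Sp$. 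Consequently, from associatedness of cuspidal data alone you only get $\sigma_2\cong\omega\,\sigma_1$ for some such character $\omega$, and $m_1=m_2$, not $\sigma_2\cong\sigma_1$; uniqueness of the partial cuspidal support up to isomorphism is \emph{not} a formal consequence of cuspidal-support theory here, which is precisely why the proposition assumes strong positivity. Your statement that ``strong positivity is not logically needed for the uniqueness step as phrased above'' is therefore wrong.

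The good news is that your closing remark is not an optional refinement but the necessary repair, and once promoted to the main argument it is essentially the paper's proof: a nontrivial twist of the $\mathbf{H}$-part can only arise together with a flipped $GL$-factor, i.e.\ with $\nu^{x_i}\rho_i$ replaced by $\nu^{-x_i}\,{}^{t}\bar\rho_i^{-1}$, and since strong positivity forces all exponents in any cuspidal-support embedding of $\sigma_{sp}$ to be positive (Definition \ref{Def:sp}), no flip can occur, so the conjugating element is a pure permutation and $\sigma_1\cong\sigma_2$. The paper implements exactly this, but via Frobenius reciprocity and repeated application of the structure formula (Lemma \ref{lem:Tadic for GU:example}): it shows $\pi_2$ would have to be a subquotient of $\rho_1'\times\cdots\times\rho_k'$ with $\rho_i'\in\{\nu^{x_i}\rho_i,\nu^{-x_i}\,{}^{t}\bar\rho_i^{-1}\}$, and that $\sigma_2\not\cong\sigma_1$ forces some $\rho_i'\cong\nu^{-x_i}\,{}^{t}\bar\rho_i^{-1}$, contradicting strong positivity. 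So rewrite the uniqueness step with the strong-positivity argument as its core (and be explicit that the a priori ambiguity is a character twist of $\sigma_1$, not a different Levi); as written, the main argument does not prove the statement.
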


\begin{proof}
 Suppose that there are non-isomorphic irreducible cuspidal representations
 $\sigma_1$ of ${\bf H}_{m_1}(F)$ and $\sigma_2$ of ${\bf H}_{m_2}(F)$, such that
 $\sigma_{sp} \hookrightarrow \pi_1 \rtimes \sigma_1$ and $\sigma_{sp}
 \hookrightarrow \pi_2 \rtimes \sigma_2$ for appropriate irreducible
 representations $\pi_1$ and $\pi_2$.

 Thus, there are cuspidal representations $\rho_1, \rho_2, \ldots, \rho_k$
 of general linear groups such that
 \begin{equation*}
 \sigma_{sp} \hookrightarrow \nu^{x_1} \rho_1 \times \nu^{x_2} \rho_2
 \times \cdots \times \nu^{x_k} \rho_k \rtimes \sigma_1.
 \end{equation*}
 Strong positivity of $\sigma_{sp}$ implies $x_i > 0$ for all $i$.

 Also, Frobenius reciprocity implies $\mu^{\ast}(\sigma_{sp}) \geq \pi_2
 \otimes \sigma_2$, which implies that
 \begin{equation*}
 \mu^{\ast}(\nu^{x_1} \rho_1 \times \nu^{x_2} \rho_2 \times \cdots \times
 \nu^{x_k} \rho_k \rtimes \sigma_1) \geq \pi_2 \otimes \sigma_2.
 \end{equation*}
 Repeated application of Lemma \ref{lem:Tadic for GU:example} implies that $\pi_2$ is an irreducible
 subquotient of $\rho'_1 \times \rho'_2 \times \cdots \times \rho'_k$,
 where $\rho'_i \in \{ \nu^{x_i} \rho_i, \nu^{-x_i}
 {}^t\overline{\rho_{i}}^{-1} \}$, for $i = 1, 2, \ldots, k$. Since
 $\sigma_1$ is not isomorphic to $\sigma_2$, using Lemma $4.2$ with obtain
 that there is an $i \in \{ 1, 2, \ldots, k \}$ such that $\rho'_i \cong
 \nu^{-x_i} {}^t\overline{\rho_{i}}^{-1}$. Since $x_i>0$, this contradicts
 strong positivity of $\sigma_{sp}$ and the proposition is proved.
\end{proof}

Furthermore, by comparing Jacquet modules as in the proof of Theorem \ref{thm:(general)map is well defined}, we also show the uniqueness of $GL$ cuspidal supports of strongly positive representation. Therefore, for any strongly positive representation $\sigma_{sp}$ of $\textbf{H}(F)$, there exists unique set of $\rho_1, \rho_2, \ldots, \rho_k$ and $\sigma_{cusp}$ such that $\sigma_{sp}$ can be considered to be the element in $D(\rho_1, \rho_2, \ldots, \rho_k; \sigma_{cusp})$.

Let $SP$ be the set of all strongly positive representations of $\textbf{H}(F)$. To see this mapping explicitly, let us collect the data from the induced representations of the form (\ref{eqn:description of ind}). Let $LJ$ be the set of $(Jord, \sigma')$ where $Jord= \displaystyle\bigcup\limits_{i=1}^{k}
\displaystyle\bigcup\limits_{j=1}^{k_i} \{ (\rho_i , b_j^{(i)}) \} $ and $\sigma'$ be an irreducible cuspidal representation in $R$ such that
\begin{enumerate}[(i)]
  \item $ \{ \rho_1, \rho_2, \ldots, \rho_k \}$ is a (possibly empty) set of mutually non-isomorphic irreducible conjugate self-dual cuspidal unitary representations of $GL$ such that $\nu^{a_{\rho_i}'} \rho_i \rtimes \sigma'$ reduces for $a_{\rho_i}' > 0$ (this defines $a_{\rho_i}'$ ),
  \item $k_i = \lceil a_{\rho_i}'\rceil$,
  \item for each $i=1, 2, \ldots, k, b_1^{(i)}, b_2^{(i)}, \ldots, b_{k_i}^{(i)}$ is a sequence of real numbers such that $a_{\rho_i}' - b_j^{(i)} \in \mathbb{Z}$, for $j=1, 2, \ldots, k_i$, and $-1 < b_1^{(i)} < b_2^{(i)} < \cdots < b_{k_i}^{(i)}$.
\end{enumerate}

Now, the last step is to show that this mapping is surjective onto $LJ$. Following \cite{M1}, we have

\begin{thrm}\label{SPDS:main}
The maps described above give a bijective correspondence between the sets SP and LJ.
\end{thrm}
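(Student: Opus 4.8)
I would set up the two maps explicitly and verify that they are mutually inverse. Write $f\colon SP\to LJ$ for the assignment $\sigma_{sp}\mapsto (Jord,\sigma_{cusp})$ already constructed: the preceding Proposition and the uniqueness of $GL$-cuspidal supports place $\sigma_{sp}$ in a unique $D(\rho_1,\ldots,\rho_k;\sigma_{cusp})$, and Theorem~\ref{thm:(general)spds embedding} realizes $\sigma_{sp}$ as the unique irreducible subrepresentation of an induced representation of the form (\ref{eqn:description of ind}), off of which $Jord$ is read. Write $g\colon LJ\to SP$ for the assignment $(Jord,\sigma')\mapsto\sigma_{(Jord;\sigma')}$, the unique irreducible subrepresentation of (\ref{eqn:description of ind}). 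Once $f$ and $g$ are known to be well defined, the composite $g\circ f$ is immediately the identity on $SP$, since $f(\sigma_{sp})$ is by construction a datum whose associated induced representation of the form (\ref{eqn:description of ind}) has $\sigma_{sp}$ as its unique irreducible subrepresentation. Thus the substance lies in three points: (1) $f$ takes values in $LJ$; (2) $\sigma_{(Jord;\sigma')}$ exists and is strongly positive, so $g$ takes values in $SP$; (3) $f\circ g$ is the identity on $LJ$.

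\textbf{Point (1).} Two conditions on the data are not quite recorded by Theorem~\ref{thm:(general)spds embedding} and must be checked. First, every cuspidal line $\rho_i$ actually occurring in a strongly positive representation is conjugate self-dual with reducibility exponent $a_{\rho_i}>0$: otherwise $\nu^{x}\rho_i\rtimes\sigma_{cusp}$ is irreducible and isomorphic to $\nu^{-x}\,{}^t\overline{\rho_i}^{-1}\rtimes\sigma_{cusp}$, which after inducing in stages contradicts Definition~\ref{Def:sp}, just as in the Lemma at the end of Section~\ref{Sec:3}; in particular $a_{\rho_i}=0$ would force $k_i\le\lceil 0\rceil=0$ by Theorem~\ref{thm:(general)spds embedding}, so the line would not occur. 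Second, the bound $k_i\le\lceil a_{\rho_i}\rceil$ must be upgraded to the equality $k_i=\lceil a_{\rho_i}\rceil$ demanded in $LJ$; this I would do by \emph{padding}, adjoining to each short line the missing trivial segments $[\nu^{a_{\rho_i}-k_i+j}\rho_i,\nu^{a_{\rho_i}-k_i+j-1}\rho_i]$ at the bottom, which are omitted from (\ref{eqn:description of ind}) under the convention of Lemma~\ref{lem:Tadic for GU:example} but contribute the admissible exponents $b^{(i)}_j=a_{\rho_i}-k_i+j-1>-1$ to $Jord$. Conditions (i)--(iii) of $LJ$ then hold by inspection.

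\textbf{Point (2).} Given $(Jord,\sigma')\in LJ$, I would discard the empty segments from (\ref{eqn:description of ind}) and rearrange the rest in non-decreasing order of $e(\cdot)$ --- which alters neither the induced representation's unique irreducible subrepresentation --- and then apply Theorem~\ref{thm:spds embed special type}(i) to obtain $\sigma_{(Jord;\sigma')}$. Its strong positivity is exactly Theorem~\ref{thm:map is surjective} when a single cuspidal line is present; for several lines I would run the same induction on the number of lines, the key point being that the Jacquet-module estimates supplied by Theorem~\ref{thm:Tadic for GU} and Lemma~\ref{lem:Tadic for GU:example} decouple over distinct lines, so that an embedding $\sigma_{(Jord;\sigma')}\hookrightarrow\nu^{s_1}\rho'_1\times\cdots\rtimes\sigma'$ with some $s_j\le 0$ would already produce such an embedding along a single line, contradicting the one-line case.

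\textbf{Point (3) and conclusion.} Since $\sigma_{(Jord;\sigma')}$ lies in $D(\rho_1,\ldots,\rho_k;\sigma')$ --- every line being nonempty because $a_{\rho_i}>0$ --- applying $f$ to it yields some $(Jord'',\sigma'')$, and Theorem~\ref{thm:(general)map is well defined}, applied to the two realizations of $\sigma_{(Jord;\sigma')}$ as the unique irreducible subrepresentation of an induced representation of the form (\ref{eqn:description of ind}), forces $\sigma''\cong\sigma'$ together with a matching, line by line, of the associated products of segments; since within a line the segments are strictly ordered by their endpoints, this gives $Jord''=Jord$. Hence $f\circ g$ is the identity on $LJ$, so $f$ and $g$ are mutually inverse and Theorem~\ref{SPDS:main} follows. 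The step I expect to be the only real obstacle is the extension of Theorem~\ref{thm:map is surjective} from one cuspidal line to several in Point (2); as indicated it should reduce to the one-line case through the independence of distinct cuspidal lines in Tadi\'{c}'s structure formula, after which the padding convention, the reordering by $e(\cdot)$, and the recovery of $Jord$ from Jacquet modules are routine and already prepared by Section~\ref{Sec:3} and the earlier parts of Section~\ref{Sec:4}.
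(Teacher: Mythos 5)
Your proposal is correct and follows essentially the route the paper intends: the paper itself gives no details for this theorem, deferring to \cite{M1}, and your argument reconstructs exactly that scheme from the ingredients already proved here (Theorem \ref{thm:spds embed special type}, Theorems \ref{thm:(general)spds embedding} and \ref{thm:(general)map is well defined}, Proposition 4.8, and the single-line surjectivity Theorem \ref{thm:map is surjective}), including the two points the paper leaves implicit --- the padding by empty segments so that $k_i=\lceil a_{\rho_i}\rceil$, and the extension of Theorem \ref{thm:map is surjective} to several cuspidal lines by induction using the fact that segments on non-isomorphic cuspidal lines are unlinked. No essential divergence from the paper's (cited) proof, and no genuine gap beyond the multi-line step you yourself flag, which is handled in \cite{M1} exactly as you sketch.
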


\section{Appendix: Even unitary case}
\label{Appendix}

We were unable to find an appropriate reference for the structural formula for unitary groups other than \cite{MT}, where the authors, in Section 15, wrote an appropriate modification needed. Here, we shortly derive it for the purpose of obtaining a classification of strongly positive representation of even unitary groups following the same approach as in the previous sections. We emphasize that the classification of strongly positive representation of even unitary groups is also established in Sections 7 and 15 of \cite{MT}, using a different approach.

\subsection{Notation for even unitary groups}
\label{Sec:5.1}

We let ${\bf G}_n={\bf U}(n,n)$ be the quasi-split unitary group in $2n$ variables defined with respect to $E/F$ and $J_n$ and let $R_U(n)$ be the Grothendieck group of the category of all admissible representations of finite length of $\textbf{G}_n(F)$ and set $R_U= \displaystyle\mathop{\oplus}\limits_{n \geq 0} R_U(n)$. As in the even general unitary groups. Let also ${\bf B=TU}, {\bf A}_0, {\bf T}$ for ${\bf G}_n$ be defined as in Section \ref{Sec:3}.

Then,
$${\bf T}(F) = \left\lbrace \begin{pmatrix}
x_1 & & & & & \\
 & \ddots & & & & \\
 & & x_n & & & \\
 & & & \bar{x}_1^{-1}& & \\
 & & & &\ddots & \\
 & & & & & \bar{x}_n^{-1}
\end{pmatrix}
| x_i \in E^{\times} \right\rbrace$$
and
$${\bf A}_0(F) = \left\lbrace \begin{pmatrix}
x_1 & & & & & \\
 & \ddots & & & & \\
 & & x_n & & & \\
 & & & x_1^{-1}& & \\
 & & & &\ddots & \\
 & & & & & x_n^{-1}
\end{pmatrix}
| x_i \in F^{\times} \right\rbrace
$$

The $F$-points of Levi subgroups in ${\bf G}_n$ that corresponds to ${\bf s}=(n_1, n_2, \ldots$, $n_k)$ is of the form

$${\bf M_s}(F) = \left\lbrace \begin{pmatrix}
g_1 & & & & & & \\
 & \ddots & & & & &\\
 & & g_k & & & &\\
 & & & g & & & \\
 & & & & ^t \bar{g}_1^{-1}& & \\
 & & & & &\ddots & \\
 & & & & & & ^t \bar{g}_n^{-1}
\end{pmatrix}
| g_i \in {\bf GL}_{n_i}(E), g \in  {\bf G}_{n-n'}(F) \right\rbrace.
$$

\subsection{Tadi\'{c}'s structure formula for unitary groups}
\label{Sec:5.2}

Note that the Weyl group for unitary groups is isomorphic to general unitary groups. Therefore, we use the same notation for $q_n(d,k)_{i_1, i_2}$ $(=\omega)$ as in the general unitary group case. Then, for $(g_1, g_2, g_3, g_4, h) \in \textbf{GL}_k(E) \times \textbf{GL}_{i_2-d-k}(E) \times \textbf{GL}_{d}(E) \times \textbf{GL}_{i_1-d-k}(E) \times \textbf{G}_{n-i_1-i_2+d+k}(F)$, we have $w \cdot (g_1, g_2, g_3, g_4, h)= (g_1, g_4,\ ^t\bar{g}_3^{-1}, g_2, h).$

Let $\pi_i$ be an irreducible smooth representation of $\textbf{GL}_{n_i}(E)$ for $i=1,2,3,4$ and let $\sigma$ be an irreducible smooth representation of $\textbf{G}_m$. By our previous calculation,
\begin{equation}\label{eqn:Weyl1 for U}
w^{-1} \cdot (\pi_1 \otimes \pi_2 \otimes \pi_3 \otimes \pi_4 \otimes \sigma) = \pi_1 \otimes \pi_4 \otimes\ \check{\pi}_3 \otimes \pi_2 \otimes \sigma.
\end{equation}

Set
\begin{equation}\label{eqn:Weyl2 for U}
( \pi_1 \otimes \pi_2 \otimes \pi_3 ) \widetilde{\rtimes}' (\pi_4 \otimes \sigma) =\ \check{\pi}_1 \times \pi_2 \times \pi_4 \otimes \pi_3 \rtimes \sigma.
\end{equation}

We follow argument in Section \ref{Sec:3} by replacing (\ref{eqn:Weyl2 for GU}) by (\ref{eqn:Weyl2 for U}), we have

\begin{thrm}[Tadi\'{c}'s structure formula for unitary groups.]\label{thm:Tadic for U}
For $\pi \in R_{GL}(i)$ and $\sigma \in R_U(n-i)$, the following structure formula holds
$$ \mu^*( \pi \rtimes \sigma ) = \mathfrak{M}^*( \pi ) \widetilde{\rtimes}' \mu^*(\sigma).$$
\end{thrm}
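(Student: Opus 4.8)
The plan is to follow the derivation of Theorem~\ref{thm:Tadic for GU} essentially line by line, keeping track of the single structural difference between $\mathbf{G}_n$ and $\mathbf{H}_n$, namely the absence of the similitude variable $\lambda$. First I would record, as in Section~\ref{Sec:5.1}, that the maximal $F$-split torus $\mathbf{A}_0$ of $\mathbf{G}_n$ has the same character lattice and the same restricted root system $\Delta=\{\alpha_i\}$ as the one attached to $\mathbf{H}_n$, and that the Weyl group $W(\mathbf{G}_n(F)/\mathbf{A}_0(F))\cong S_n\rtimes\{\pm1\}^n$ coincides with $W(\mathbf{H}_n(F)/\mathbf{A}_0(F))$. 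Consequently Lemmas~4.1--4.8 of~\cite{T1}, which depend only on the simple roots, the Weyl group, and its action on the simple roots, apply verbatim; in particular the same representatives $q_n(d,k)_{i_1,i_2}$ of the relevant double cosets $[W_{\Theta_1}\backslash W/W_{\Theta_2}]$ may be used, and their action on the standard Levi factors is the one stated just before~(\ref{eqn:Weyl1 for U}) --- the only change from the $\mathbf{H}_n$ computation being that the $\mathbf{GL}_d(E)$-block is sent to ${}^t\bar g_3^{-1}$ rather than to $\lambda\,{}^t\bar g_3^{-1}$.

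Next I would feed this into the Bernstein--Zelevinsky geometric lemma exactly as in Tadi\'c's computation for $\mathbf{GSp}_{2n}$ reproduced for $\mathbf{H}_n$ above: for each standard Levi the corresponding Jacquet module of $\pi\rtimes\sigma$ decomposes, via the geometric lemma, as a sum over the double cosets parametrised by the $q_n(d,k)_{i_1,i_2}$ of representations induced from the $w^{-1}$-twists of tensor products $\pi_1\otimes\pi_2\otimes\pi_3\otimes\pi_4\otimes\sigma$, and~(\ref{eqn:Weyl1 for U}) evaluates each such twist. The point is that the contragredient-type action on the $\mathbf{GL}_d(E)$-factor now produces $\check\pi_3$ with no accompanying central twist, and the action on the $\mathbf{G}_m$-factor is trivial, so no factor $\omega_{\pi}$ appears; assembling the summands and reorganising them as in~\cite[Section~4]{T1} yields precisely the operation $\widetilde{\rtimes}'$ of~(\ref{eqn:Weyl2 for U}) applied to $\mathfrak{M}^*(\pi)$ and $\mu^*(\sigma)$. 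One then checks, as in the $\mathbf{H}_n$ case, that the range of the indices $(d,k,i_1,i_2)$ matches exactly the terms produced by $\mathfrak{M}^*$ on the $\mathbf{GL}$ side and by $\mu^*$ on the $\mathbf{G}_m$ side.

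Since all the hard combinatorial input --- the classification of double coset representatives and their explicit action on the simple roots --- is literally inherited from~\cite{T1}, I do not expect a genuine obstacle. The one point requiring a little care is the bookkeeping showing that every occurrence of $\lambda$ and of the central characters $\omega_{\pi_i}$ in the $\mathbf{H}_n$ computation may be consistently specialised to the trivial contribution in the $\mathbf{G}_n$ setting, so that~(\ref{eqn:Weyl2 for GU}) degenerates cleanly into~(\ref{eqn:Weyl2 for U}); concretely this reduces to the immediate observation that dropping the $\mathbf{G}_m$-central-character twist is compatible with the functor $\mu^*$ and with multiplication in $R_U$. I would therefore present the argument compactly: the Weyl group and root data of $\mathbf{G}_n$ coincide with those of $\mathbf{H}_n$, and repeating the proof of Theorem~\ref{thm:Tadic for GU} with~(\ref{eqn:Weyl1 for GU}) and~(\ref{eqn:Weyl2 for GU}) replaced by~(\ref{eqn:Weyl1 for U}) and~(\ref{eqn:Weyl2 for U}) gives the stated formula.
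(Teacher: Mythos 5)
Your proposal is correct and follows essentially the same route as the paper: the authors likewise note that the root data and Weyl group of $\mathbf{G}_n$ coincide with those of $\mathbf{H}_n$, reuse the representatives $q_n(d,k)_{i_1,i_2}$ and Lemmas 4.1--4.8 of \cite{T1}, and simply rerun the Section \ref{Sec:3} argument with (\ref{eqn:Weyl1 for GU}), (\ref{eqn:Weyl2 for GU}) replaced by (\ref{eqn:Weyl1 for U}), (\ref{eqn:Weyl2 for U}), the only change being the disappearance of the similitude factor $\lambda$ and the central-character twists $\omega_{\pi}$. Your extra bookkeeping on this point matches what the paper leaves implicit.
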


\subsection{Strongly positive representation for even unitary groups}
\label{Sec:5.3}

With Tadi\'c's structure formula for unitary groups (Section \ref{Sec:5.2}), we apply the arguments as in Section \ref{Sec:4} to obtain the analogous results for even unitary groups. In this subsection, we only state the main result for even unitary groups and skip the proof since we already go through the similar arguments in Section \ref{Sec:4}.

Let $SP'$ be the set of all strongly positive representations of $\textbf{G}(F)$ and $LJ'$ be the set of $(Jord, \sigma')$ where $\sigma'$ be an irreducible cuspidal representation in $R_U$ and $Jord$ be exactly as in the case of even general unitary groups. Then, one can repeat the same arguments as before to obtain the bijective correspondence between $SP'$ and $LJ'$.

\section*{Acknowledgement}
The first author would like to thank the organizers of the workshop on Representation theory of $p$-adic groups at IISER Pune, Professors Anne-Marie Aubert, Manish Mishra, Alan Roche, Steven Spallone for their invitation and hospitality.

\begin{flushleft}
{Yeansu Kim \\
Department of Mathematics education, Chonnam National University \\
77 Yongbong-ro, Buk-gu, Gangju city, South Korea\\
E-mail: ykim@jnu.ac.kr}
\end{flushleft}

\begin{flushleft}
{Ivan Mati\'{c} \\
Department of Mathematics, University of Osijek \\ Trg Ljudevita
Gaja 6, Osijek, Croatia\\ E-mail: imatic@mathos.hr}
\end{flushleft}

\end{document}